\theoremstyle{plain}
\newtheorem{theorem}{\bf Theorem}[section]
\newtheorem{proposition}[theorem]{\bf Proposition}
\newtheorem{lemma}[theorem]{\bf Lemma}
\theoremstyle{definition}
\newtheorem{definition}[theorem]{\bf Definition}
\newcommand{\N}{\mathbb N}
\newcommand{\Z}{\mathbb Z}
\renewcommand{\t}{\, | \,}
\newcommand{\und}{\;\mbox{ and }\;}
\newcommand{\la}{\langle}
\newcommand{\ra}{\rangle}
 \DeclareMathOperator{\ord}{ord}
 \DeclareMathOperator{\supp}{supp}
\newcommand{\bdot}{\boldsymbol{\cdot}}
\numberwithin{equation}{section}
\begin{document}

\title{On an inverse problem of
	Erd\H os, Kleitman, and Lemke}


\author{Qinghai Zhong}

\address{Institute for Mathematics and Scientific Computing,  University of Graz, NAWI Graz \\
	Heinrichstra{\ss}e 36, 8010 Graz, Austria}
\email{qinghai.zhong@uni-graz.at}

\subjclass[2010]{20D60, 11B75, 11P70}
\keywords{product-one sequences, zero-sum sequences, cyclic groups,  Dihedral groups, Dicyclic groups}

\begin{abstract}Let $(G, 1_G)$ be a finite group and let $S=g_1\bdot \ldots\bdot g_{\ell}$ be a nonempty sequence over $G$. We say $S$ is a tiny product-one sequence if its terms can be ordered such that their product equals $1_G$ and $\sum_{i=1}^{\ell}\frac{1}{\ord(g_i)}\le 1$. Let $\mathsf {ti}(G)$ be the smallest integer $t$ such that every sequence $S$ over $G$ with $|S|\ge t$ has a tiny product-one subsequence. The direct problem is to obtain the exact value of $\mathsf {ti}(G)$, while the inverse problem is to characterize the structure of long sequences over $G$ which have no tiny product-one subsequences. In this paper, we consider the inverse problem for cyclic groups and we also study both direct and inverse problems for dihedral groups and dicyclic groups.

\end{abstract}

\maketitle

\bigskip
\section{Introduction} \label{1}
\bigskip

Let $G$ be a finite group. By a sequence $S$ over $G$, we mean a finite unordered sequence with terms from $G$, where the repetition of elements is allowed. We say that $S$ is a product-one sequence if its terms can be ordered so that their product equals the identity element of $G$. If $G$ is abelian, a product-one sequence is also called a zero-sum sequence.
Suppose  $S=g_1\bdot \ldots\bdot g_{\ell}$. If $1\le \ell\le \max\{\ord(g)\colon g\in G\}$, we say $S$ is a short product-one sequence. If $0<\sum_{i=1}^{\ell}\frac{1}{\ord(g_i)}\le 1$, we say $S$ is a tiny product-one sequence. Clearly, a tiny product-one sequence is a short product-one sequence.

 The small Davenport constant $\mathsf d (G)$ is the maximal integer $\ell$ such that there is a sequence of length $\ell$ which has no non-trivial product-one subsequences. We denote by $\eta (G)$ (or $\mathsf {ti} (G)$ respectively) the smallest integer $\ell$ such that every sequence of length at least $\ell$ has a short (or tiny respectively) product-one subsequence. Then $\mathsf d(G)+1\le \eta(G)\le \mathsf {ti}(G)$. The study of $\mathsf d(G)$ and $\eta(G)$ is a central topic in zero-sum theory which is a popular  branch of  combinatorial number theory. For more details, see \cite{Ga-Ge06b} for a survey and for recent progress, we refer to \cite{Ga-Ge-Sc07a, Sc10b, Ga-Ge-Gr10a, Sc11b, Ch-Sa14a, Gi18a, Gi-Sc19a, Gi-Sc19b}.

The investigations on $\mathsf {ti}(G)$ originate in the following conjecture, addressed by Erd\H os and
Lemke in the late eighties (see \cite[Introduction]{Le-Kl89}).

\smallskip
{\it Is it true that out of $n$ divisors of $n$, repetitions
being allowed, one can always find a certain number of them that sum up to $n$? }
\smallskip

Motivated by this
conjecture, Kleitman and Lemke \cite[Theorem 1]{Le-Kl89} proved the following stronger result.

\smallskip
{\it For any given integers $a_1, \ldots, a_n$,  there is a non-empty
subset $I \subset [1,n]$ such that $n$ divides $\sum_{i\in I}a_i$ and $\sum_{i\in I}\gcd(a_i, n)\le n$. With our notation, this is $\mathsf {ti}(C_n)\le n$ and it is easy to see that $\mathsf {ti}(C_n)=n$, where $C_n$ is a cyclic group of order $n$.
}
\smallskip

In the same paper, they also gave the following conjecture and confirmed the conjecture for elementary $p$-groups, dihedral groups,  dicyclic groups, and groups with $|G|\le 15$ (see \cite[Open Problems]{Le-Kl89}).

\smallskip
\noindent{\bf Conjecture A. }{\it 
Let $G$ be a finite group. Then $\mathsf {ti}(G)\le |G|$.
}
\smallskip

This conjecture for all finite abelian groups was proved by Geroldinger \cite{Ge93}.  An alternative
proof for all finite abelian groups,  using graph pebbling,  was later found by Elledge and Hurlbert \cite[Theorem 2]{El-Hu05}. For more work on applications of graph pebbling to zero-sum problems
we refer to \cite{ch89, De97, Hu99, Hu05}.
In 2012,  B. Girard \cite[Theorem 2.1]{Gi12},  using a result of Alon and Dubiner \cite{Al-Du95},  gave a new upper bound of $\mathsf {ti}(G)$ for all finite abelian groups $G$, which provided the right order of magnitude. The exact value of $\mathsf {ti}(G)$ is only known for limited groups. For finite abelian groups $G$ of  rank $2$,  B. Girard conjectured that $\mathsf {ti}(G)=\eta(G)$, which is still wide open, while there are finite abelian groups $G$ of rank $3$ such that $\mathsf {ti}(G)>\eta(G)$ (see \cite[Theorem 1.3]{Fa-Ga-Pe-Wa-Zh13}).

The inverse problem of $\mathsf {ti}(G)$ is to characterize the structure of long sequences $S$ (the extremal case is $|S|=\mathsf {ti}(G)-1$) which have no tiny product-one subsequences. In this manuscript, we give a first characterization of the inverse problem of $\mathsf {ti}(G)$ for  cyclic groups $G$, where we will use  {\it smooth sequences} (see Definition \ref{de1}).

\begin{theorem}	\label{th1}
	For every real $\epsilon>0$, there exists  $N_{\epsilon}\in \N$ such that for all cyclic groups $G$ with $|G|\ge N_{\epsilon}$ and for all sequences $S$ over $G$ with $|S|\ge (\frac{1}{2}+\epsilon)|G|$, if  $S$ has no tiny product-one subsequences, then  $S$ is smooth.
\end{theorem}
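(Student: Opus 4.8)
Throughout write $n=|G|$ and identify $G$ with $\Z/n\Z$, so that $\ord(a)=n/\gcd(a,n)$ and $\tfrac1{\ord(a)}=\tfrac{\gcd(a,n)}{n}$ for every term $a$; thus a zero-sum subsequence $T$ of $S$ is tiny precisely when $\sum_{a\mid T}\gcd(a,n)\le n$. The starting point of the plan is the elementary inequality
\[
m+\gcd(m,n)\le n\qquad\text{for all }1\le m\le n-1,
\]
which follows on writing $d=\gcd(m,n)$, $m=dm'$, $n=dn'$ with $m'<n'$: then $m+\gcd(m,n)=d(m'+1)\le dn'=n$. Its point is this: if $U$ is a minimal zero-sum subsequence of $S$ of index $1$, say $U=(n_1g)\bdot\ldots\bdot(n_\ell g)$ for a generator $g$ and integers $n_i\in[1,n-1]$ with $\sum_i n_i=n$, then $\sum_i\gcd(n_ig,n)=\sum_i\gcd(n_i,n)\le\sum_i n_i=n$, so $U$ is tiny. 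Hence the hypothesis forces at once:
\[
(\ast)\qquad S\text{ has no minimal zero-sum subsequence of index }1.
\]

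The plan is to reduce the theorem to the assertion that $S$ is zero-sum-free: once this is known, since $|S|\ge(\tfrac12+\epsilon)n$ exceeds the Savchev--Chen threshold $\lfloor n/2\rfloor+1$ for $n\ge N_\epsilon$, the structure theorem of Savchev and Chen for long zero-sum-free sequences over cyclic groups shows directly that $S$ is smooth. For the reduction, assume $S$ is not zero-sum-free and let $S_0$ be a zero-sum-free subsequence of maximal length. If $|S_0|>n/2$, then $S_0$ is itself smooth, so for a suitable generator $g$ its set of subsequence sums together with $0$ equals $\{0,g,2g,\dots,mg\}$, where $m\le n-1$ is the sum of the $g$-coordinates of the terms of $S_0$, and moreover (by the completeness property built into smoothness) each $jg$ with $1\le j\le m$ is realised by a subsequence of $S_0$ whose coordinates sum to exactly $j$. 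By maximality of $S_0$, any term $t$ of $S\bdot S_0^{-1}$ has $-t$ among these sums --- it cannot be $0$, since $0\notin\supp(S)$ --- say $-t=jg$, so $t=(n-j)g$; picking a subsequence $J$ of $S_0$ with coordinate sum $j$, the subsequence $J\bdot t$ of $S$ is zero-sum with $g$-coordinates summing to $j+(n-j)=n$, and since every proper subsequence of $J\bdot t$ then has coordinate sum in $[1,n-1]$ it is in fact minimal --- a minimal zero-sum subsequence of index $1$, contradicting $(\ast)$. Therefore $S$ is zero-sum-free.

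What remains is the case where $S$ is not zero-sum-free but every zero-sum-free subsequence of $S$ has length at most $n/2$; this is the crux of the theorem, and the place where ``no tiny product-one subsequence'' is strictly weaker than ``zero-sum-free'' --- over $C_6$, for instance, $4\bdot4\bdot1\bdot3$ is a minimal zero-sum sequence of index $2$ with $\sum_a\gcd(a,6)=8>6$, so it is not tiny, and such examples block a direct appeal to a structure theorem. Here I would take a zero-sum subsequence $W$ of $S$ of maximal length, so that $U:=S\bdot W^{-1}$ is zero-sum-free with $|U|\le n/2$ and hence $|W|\ge\epsilon n$; a short argument from $(\ast)$ shows $\ind(W)\ge2$, and in fact $W$ has no minimal zero-sum subsequence of index $1$. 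The goal is then to rule out this configuration for $n$ large, by decomposing $W$ into pairwise disjoint minimal zero-sum subsequences --- each of index $\ge2$ and hence of length in $[3,\lfloor n/2\rfloor+1]$ by the Savchev--Chen bound on the index of long minimal zero-sum sequences --- and exploiting, according to the range, either the richness of the set of subsequence sums of a long index-$2$ minimal zero-sum piece or a pigeonhole among many short pieces, so as in either case to build inside $S$ a zero-sum subsequence whose coordinates sum to exactly $n$, i.e.\ a tiny one, contradicting the hypothesis. I expect this last part to be the principal technical obstacle; it is also where the slack between $(\tfrac12+\epsilon)|G|$ and $\tfrac12|G|$ enters, the bound being essentially sharp, since a non-tiny minimal zero-sum sequence of index $2$ already has length about $n/2$ and is certainly not smooth.
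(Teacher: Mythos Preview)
Your first two paragraphs are correct and in fact recover exactly the content of the paper's Lemma~3.2: any minimal zero-sum subsequence that can be written with $g$-coordinates summing to $n$ is tiny, and hence if a maximal zero-sum-free subsequence $S_0$ of $S$ has $|S_0|>n/2$, smoothness of $S_0$ (Savchev--Chen) together with maximality forces $S=S_0$. But your proof stops there. The third paragraph is not a proof but a programme: you explicitly leave open the case where $S$ is not zero-sum-free yet every zero-sum-free subsequence has length $\le n/2$, and the proposed attack---decompose the long zero-sum part into minimal pieces of index $\ge 2$ and then combine them via ``richness'' or ``pigeonhole'' to manufacture a tiny zero-sum---is not carried out, nor is it clear how to carry it out uniformly in $n$. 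As stated, this is a genuine gap: the theorem asserts $S$ is smooth (hence zero-sum-free), so this crux case must be \emph{excluded}, not merely described.

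The paper bypasses your crux case entirely by a different mechanism. It picks the largest prime-power factor $q^r\mid n$, writes $G\cong H_1\oplus H_2$ with $|H_1|=m=n/q^r$ and $|H_2|=q^r$, and uses the key fact $\mathsf K(C_{q^r})=1$: over a cyclic $p$-group, \emph{every} minimal product-one sequence is tiny. Projecting to $H_1$, one extracts from each order-class of $S$ maximal disjoint tiny product-one subsequences $W_{i,j}$; what remains outside has length at most $(r+1)(m-1)$. The sequence $W'=S_0\bdot\prod^{\bullet}\sigma(W_{i,j})$ over $H_2$ has length $\ge (q^r+1)/2$ and satisfies $\mathsf k(W_{i,j})\le 1/q^i\le \mathsf k(\sigma(W_{i,j}))$. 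If $W'$ has a minimal product-one subsequence, it is tiny over $H_2$ and pulls back to a tiny product-one subsequence of $S$; if not, $W'$ is zero-sum-free, hence $\mathsf k(W')<1$ by smoothness, so $\mathsf k(W)<1$, and one is back in your easy case. The length condition becomes $|S|\ge \frac{n+1}{2}+\frac{r+1}{q^r}\,n-(r+1)$, and $(r+1)/q^r\to 0$ as $n\to\infty$ supplies the $\epsilon$. The point is that the $p$-group fact $\mathsf K=1$ does the work your index-$\ge 2$ analysis would have had to do.
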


Most of intention is on abelian groups. Now we turn to  non-abelian groups. 
Dihedral groups and dicyclic groups are the most studied non-abelian groups in zero-sum theory (see \cite{Br-Ri18, Ba07, Ga-Lu08a, Oh-Zh19a, Oh-Zh19b}).
By the inverse result of cyclic groups in hand, 
we can study both the direct and  the inverse problems of $\mathsf {ti}(G)$ for dihedral and dicyclic groups $G$.

\begin{theorem}\label{th2} Let $G$ be a dihedral group of order $2n$ with $n\ge 3$. 
	\begin{enumerate}
		\item $\mathsf {ti}(G)=2n$.
		
		\item Let $S$ be a sequence over $G$ with $|S|=2n-1$. Then   $S$ has no tiny product-one subsequences if and only if   there exist $g,h\in G$ with $G=\langle g,h\colon g^n=h^2=1, hg=g^{-1}h\rangle$ such that $S=g^{[n-1]}\bdot h\bdot hg\bdot \ldots \bdot hg^{n-1}$.
	\end{enumerate}
\end{theorem}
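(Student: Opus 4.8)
The plan is to split an arbitrary sequence $S$ over $G$ along the canonical index-$2$ decomposition of $G$. Since $n\ge 3$, $G$ has a unique cyclic subgroup $C$ of order $n$ (it contains every element of order $n$, e.g. any rotation), and I put $\Theta:=G\setminus C$, the set of the $n$ reflections, each of which is an involution. Given $S$, let $R$ be the subsequence of those terms of $S$ lying in $C$ and $\Theta_S$ the subsequence of those lying in $\Theta$, so $|S|=|R|+|\Theta_S|$. Two elementary remarks will be used repeatedly. (i) If a reflection $t$ occurs at least twice in $S$, then $t\bdot t$ is a tiny product-one subsequence, since $t^2=1$ and $\tfrac12+\tfrac12\le 1$; hence if $S$ has no tiny product-one subsequence, the terms of $\Theta_S$ are pairwise distinct and $|\Theta_S|\le n$. (ii) Orders and products of elements of $C$ are computed inside $C\cong C_n$, so a zero-sum subsequence $T$ of $R$ with $\sum_{x\in T}\tfrac1{\ord(x)}\le 1$ is simultaneously a tiny product-one subsequence of $S$.

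\emph{Proof of (1).} For the upper bound $\mathsf{ti}(G)\le 2n$ (which also follows from Conjecture A for dihedral groups, \cite{Le-Kl89}), let $|S|=2n$. If a reflection repeats, (i) yields a tiny product-one subsequence; otherwise $|\Theta_S|\le n$ by (i), so $|R|\ge n=\mathsf{ti}(C_n)$, whence $R$ has a tiny zero-sum subsequence, which by (ii) is tiny product-one in $S$. For the lower bound, fix $g,h$ with $G=\langle g,h\colon g^n=h^2=1,\,hg=g^{-1}h\rangle$ and check that $S_0:=g^{[n-1]}\bdot h\bdot hg\bdot\ldots\bdot hg^{n-1}$, of length $(n-1)+n=2n-1$, has no tiny product-one subsequence. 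A subsequence $T\t S_0$ consists of $a$ copies of $g$ (with $0\le a\le n-1$) together with the reflections $hg^i$ for $i$ in some set $I$ of size $b$, and the tiny condition reads $\tfrac an+\tfrac b2\le 1$. If $b=0$ then $T=g^{[a]}$ with $1\le a\le n-1$ and its product $g^a\ne 1$. If $b=1$ then, as the $g$'s coincide, every ordering of $T$ has product $g^{a_1}(hg^i)g^{a_2}=hg^{\,i+a_2-a_1}\in\Theta$, which is never $1$. If $b\ge 2$, the tiny condition forces $b=2$ and $a=0$, and the product of the two distinct reflections $hg^i,hg^j$ in either order is $g^{\pm(j-i)}\ne 1$. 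Thus $S_0$ has no tiny product-one subsequence, and $\mathsf{ti}(G)=2n$.

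\emph{Proof of (2).} The implication ``$\Leftarrow$'' is exactly the lower-bound computation above. For ``$\Rightarrow$'' let $|S|=2n-1$ with $S$ having no tiny product-one subsequence. By (i), $|\Theta_S|\le n$, so $|R|\ge n-1$; by (ii) and $\mathsf{ti}(C_n)=n$, $R$ has no tiny zero-sum subsequence, so $|R|\le n-1$. Hence $|R|=n-1$ and $|\Theta_S|=n$; as the reflections of $S$ are pairwise distinct and $|\Theta|=n$, the subsequence $\Theta_S$ consists of every reflection of $G$ exactly once. It remains to identify $R$, and here I invoke the extremal inverse statement for cyclic groups: every sequence over $C_n$ of length $n-1$ with no tiny zero-sum subsequence equals $g_0^{[n-1]}$ for some generator $g_0$ of $C_n$. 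For $n$ large this is obtained by applying Theorem \ref{th1} to $R$ (note $n-1\ge(\tfrac12+\epsilon)n$) and using the structure of smooth sequences (Definition \ref{de1}), which forces a length-$(n-1)$ smooth sequence over $C_n$ to be of that form; the finitely many small $n$ are handled by a direct argument. Applying this to $R$ gives a generator $g$ of $C$ with $R=g^{[n-1]}$; letting $h$ be any reflection occurring in $S$, we have $G=\langle g,h\colon g^n=h^2=1,\,hg=g^{-1}h\rangle$, and since $\{hg^i\colon 0\le i\le n-1\}=\Theta=\supp(\Theta_S)$ with each reflection appearing once in $S$, we conclude $S=g^{[n-1]}\bdot h\bdot hg\bdot\ldots\bdot hg^{n-1}$.

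\emph{On the main obstacle.} Every dihedral-specific step above is routine arithmetic in $D_{2n}$; the genuine content is the extremal cyclic inverse result used to pin down $R$. For large $n$ it is packaged into Theorem \ref{th1}, but one still needs the precise shape of a smooth sequence to collapse the maximal-length case to $g_0^{[n-1]}$, and the values of $n$ below the threshold $N_\epsilon$ must be treated separately — either via a threshold-free sharpening of the cyclic analysis or by explicit verification — so this is where the real work lies.
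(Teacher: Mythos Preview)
Your approach is essentially the same as the paper's: split $S$ into its cyclic part $S_H$ (your $R$) and its reflection part $S_{G_0}$ (your $\Theta_S$), use the repeated-reflection observation to force $|S_{G_0}|\le n$, use $\mathsf{ti}(C_n)=n$ to force $|S_H|\le n-1$, and then reduce to the cyclic inverse problem for $S_H$. Your lower-bound verification for $S_0$ is in fact a bit more explicit than the paper's.

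The only genuine gap is the one you yourself flag. You invoke Theorem~\ref{th1} to handle $R$, but that theorem carries an ineffective threshold $N_\epsilon$, and you defer the remaining $n$ to an unspecified ``direct argument.'' The paper closes exactly this gap with Proposition~\ref{c3.3}: for \emph{every} cyclic group of order $n\ge 3$, a sequence of length $n-1$ with no tiny product-one subsequence must equal $g^{[n-1]}$ for some generator $g$. This is precisely the ``threshold-free sharpening of the cyclic analysis'' you anticipate, and its proof---via Proposition~\ref{c3.2} (which gives smoothness already at length $\ge \tfrac{9n}{10}$), Lemma~\ref{l3.3} for prime-power $n$, and an explicit treatment of $n=6$---is indeed where the real work lies. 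Once Proposition~\ref{c3.3} is in hand, your argument and the paper's are identical.
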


\begin{theorem}\label{th3} Let $G$ be a dicyclic group of order $4n$ with $n\ge 116$.
		\begin{enumerate}
		\item $\mathsf {ti}(G)=2n+1$.
		
		\item Let $S$ be a sequence over $G$ with $|S|=2n$. Then  $S$ has no tiny product-one subsequences if and only if there exist $g,h\in G$ with $G=\langle g,h\colon  g^{2n}=1, h^2=g^n, \text{ and } hg=g^{-1}h\rangle$ such that $S=g^{[2n-1]}\bdot h$.
	\end{enumerate}
\end{theorem}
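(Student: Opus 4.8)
\emph{Setup and the lower bound.} The plan is to fix generators $G=\la g,h\colon g^{2n}=1,\ h^2=g^n,\ hg=g^{-1}h\ra$, write $C=\la g\ra\cong C_{2n}$ for the index-$2$ cyclic subgroup, and record the elementary facts that every element of the nontrivial coset $hC$ has order $4$ and that $(g^ah)(g^bh)=g^{a-b+n}$ for all $a,b$. Given a sequence $S$ over $G$, split it as $S=S_0\bdot S_1$, with $S_0$ the subsequence of terms lying in $C$ and $S_1$ the subsequence of terms lying in $hC$, and put $m=|S_1|$. For the lower bound I would first check that $S=g^{[2n-1]}\bdot h$ has no nontrivial product-one subsequence at all: any nonempty subsequence either lies in $C$, with product $g^k$ and $1\le k\le 2n-1$, or contains the single term $h$ and hence has product in $hC$. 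Thus $\mathsf d(G)\ge 2n$ and $\mathsf{ti}(G)\ge\mathsf d(G)+1\ge 2n+1$, and this $S$ (of length $2n$) gives the ``if'' direction of (2); applying an automorphism of $G$ covers every $S$ of the stated shape.

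\emph{Step 1: forcing $m$ to be small.} Now assume $S$ has no tiny product-one subsequence. Using only the identity $(g^ah)(g^bh)=g^{a-b+n}$ and reorderings of short pieces of $S_1$, I would extract three constraints, each by exhibiting an otherwise forbidden tiny (i.e.\ cost $\le 1$) product-one subsequence: (i) $S_1$ contains no pair $g^ah,\,g^{a+n}h$, since their product is $1_G$ at cost $\tfrac12$; (ii) no term has multiplicity $\ge 4$ in $S_1$ and at most one term has multiplicity $\ge 2$, using e.g.\ $(g^ah)^{[2]}\bdot(g^bh)^{[2]}$, whose product is $g^n\cdot g^n=1_G$ at cost $1$; (iii) the support $D=\supp(S_1)\subseteq\Z_{2n}$ is a Sidon set, since $b_1+b_3\equiv b_2+b_4\pmod{2n}$ with $b_1,b_2,b_3,b_4\in D$ pairwise distinct lets one order the corresponding four terms so that the product is $g^{b_1-b_2+b_3-b_4}=1_G$ at cost $1$. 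Since a Sidon set in $\Z_{2n}$ has fewer than $\sqrt{2n}+1$ elements, (ii) and (iii) give $m<\sqrt{2n}+3$.

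\emph{Step 2: reduce to $C_{2n}$ and run the key exclusion.} Suppose in addition $|S|\ge 2n$. Then $|S_0|=|S|-m> 2n-\sqrt{2n}-3$, which exceeds $(\tfrac12+\epsilon)\cdot 2n$ for a suitably small fixed $\epsilon$ once $n$ is large; this is where the explicit bound $n\ge 116$ enters, chosen so that this inequality holds while also $2n\ge N_\epsilon$ for the relevant $\epsilon$. Since $S_0$ inherits from $S$ the absence of a tiny product-one subsequence over $C\cong C_{2n}$, Theorem \ref{th1} shows $S_0$ is smooth, and from the description of smooth sequences (Definition \ref{de1}) together with the lower bound on $|S_0|$ I would conclude that for some generator $g$ of $C$ the sequence $S_0$ contains at least $n$ terms equal to $g$. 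The key exclusion is then: if $m\ge 2$, pick terms $g^bh,\,g^{b'}h$ of $S_1$; for $c_-+c_+=c$ with $c\le n$, the subsequence consisting of $g^bh$, $g^{b'}h$, and $c$ of the terms $g$ of $S_0$, ordered so that $c_+$ of the $g$'s precede both $g^bh,g^{b'}h$ and the other $c_-$ lie between them, has product $g^{\,c_+-c_-+b-b'+n}$. Because, for $c\in\{n-1,n\}$, signed sums of $c$ copies of $g$ cover all residues modulo $2n$ of the parity of $c$ and the two choices cover both parities, one can choose such $c\le n$ and $c_+,c_-$ making this product $1_G$; the cost is $2\cdot\tfrac14+c\cdot\tfrac1{2n}\le 1$, a contradiction. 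Hence $m\le 1$.

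\emph{Conclusion and the main difficulty.} If $|S|=2n+1$ had no tiny product-one subsequence, then $m\le1$ forces $|S_0|\ge 2n=\mathsf{ti}(C_{2n})$, so $S_0$, hence $S$, has one, a contradiction; thus $\mathsf{ti}(G)\le 2n+1$, which together with the lower bound proves (1). For (2), if $|S|=2n$ has no tiny product-one subsequence then $m\le 1$, and $m=0$ is impossible since $|S_0|=2n=\mathsf{ti}(C_{2n})$; so $m=1$ and $S_0$ is a smooth sequence of length $2n-1$ over $C_{2n}$ with no tiny product-one subsequence, which is the extremal length, so by the structure of smooth sequences $S_0=g^{[2n-1]}$ for a generator $g$ of $C$. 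Writing the leftover term of $S$ as $g^jh$ and replacing $h$ by $g^jh$ (still a valid pair of dicyclic generators) yields $S=g^{[2n-1]}\bdot h$. The main obstacles are the two appeals to the internal structure of smooth sequences: deducing ``$S_0$ contains $n$ copies of a single generator'' from smoothness plus length, and the extremal classification of length-$(2n-1)$ smooth sequences over $C_{2n}$. Granting these, the signed-count argument of Step 2 and the Sidon bound of Step 1 are the only new ingredients, and both are routine, modulo a short finite check that the one possibly-repeated term of $S_1$ creates no tiny product-one subsequence together with a distinct term.
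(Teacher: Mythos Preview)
Your overall strategy is the same as the paper's: bound $|S_1|$ via a Sidon/pigeonhole constraint on the coset $hC$, then use smoothness of $S_0$ over $C\cong C_{2n}$ to build a tiny product-one subsequence whenever $|S_1|\ge 2$. The paper packages this as Lemma~\ref{l5.2}, splitting into the cases $|S_H|\ge \tfrac{9n}{5}$ (smoothness) and $|S_{G_0}|\ge\lceil n/5\rceil+1$ (pigeonhole on pairs), whereas you first run the Sidon bound unconditionally to get $m<\sqrt{2n}+3$ and then always invoke smoothness; either organization works.

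The genuine gap is your appeal to Theorem~\ref{th1}. That theorem only asserts the \emph{existence} of an $N_\epsilon$, with no explicit value, so from it alone you cannot certify that $n\ge 116$ is large enough; your sentence ``chosen so that \ldots\ $2n\ge N_\epsilon$'' is an assumption, not a deduction. The paper avoids this by using the explicit Proposition~\ref{c3.2}: any sequence over $C_{2n}$ of length $\ge\tfrac{9}{10}\cdot 2n$ without a tiny product-one subsequence is smooth, valid for all $n\ge 3$. With your Sidon bound you then need $2n-\sqrt{2n}-3\ge\tfrac{9n}{5}$, i.e.\ $n/5\ge\sqrt{2n}+3$, which indeed holds for $n\ge 116$. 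Likewise, for the endgame at $|S_0|=2n-1$ the paper invokes Proposition~\ref{c3.3} (the explicit extremal classification) rather than Theorem~\ref{th1}. Once you swap in these explicit results, your two self-flagged ``obstacles'' dissolve: a $g$-smooth sequence of length $\ell$ over $C_{2n}$ has more than $2\ell-2n$ terms equal to $g$ (since $\sum n_i<2n$ with each $n_i\ge 1$), which exceeds $n$ comfortably for $\ell>2n-\sqrt{2n}-3$ and $n\ge 116$; and the length-$(2n-1)$ classification is exactly Proposition~\ref{c3.3}. Your signed-sum construction in Step~2 is a correct alternative to the paper's use of $\Pi(S_H)$, and the Sidon bound in Step~1 is fine.
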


The manuscript is organized as following. In Section $2$, we gather all necessary notation and definitions.
 In Section $3$, we focus on the inverse problem for cyclic groups and give the proof of Theorem \ref{th1}. In Section $4$, we consider dihedral and dicyclic groups $G$ and the proofs of Theorems \ref{th2} and \ref{th3} will be provided. In addition,
we also study both the direct and the inverse problems of $\eta(G)$ (see Theorems \ref{t1} and \ref{t2}).


\bigskip
\section{Preliminaries} \label{2}
\bigskip

We denote by $\N$ the set of positive integers and let $\N_0 = \N \cup \{ 0 \}$ be the set of non-negative integers. 
For non-negative integers $a, b \in \N_0$, let $[a,b] = \{x \in \Z \mid a \le x \le b\}$ be the discrete interval. 

 Let $G$ be a multiplicatively written finite group with identity element $1_G$ and let $G_0 \subset G$ be a subset. For an element $g \in G$, we denote by $\ord(g) \in \N$ the order of $g$  and by $\la G_0 \ra \subset G$ the subgroup generated by $G_0$.

The elements of the free abelian monoid $\mathcal F (G)$ will be called  {\it sequences} over $G$.  This terminology goes back to combinatorial number theory. Indeed, a sequence over $G$ can be viewed as a finite unordered sequence with terms from $G$, where the repetition of elements is allowed. We briefly discuss our notation here which follows the monograph \cite[Chapter 10.1]{Gr13a}. In order to avoid confusion between multiplication in $\mathcal F(G)$ and multiplication in $G$, we denote multiplication in $\mathcal F (G)$ by the boldsymbol $\bdot$  and   use brackets for all exponentiation in $\mathcal F (G)$. Thus a sequence $S \in \mathcal F (G)$ can be written in the form
$$
S \, = \, g_1 \bdot \ldots \bdot g_{\ell} \, = \, {\small \prod}^{\bullet}_{g\in G} \, g^{\mathsf v_g(S)} \, \in \, \mathcal F (G),
$$
where $g_1, \ldots, g_{\ell} \in G$ are the terms of $S$. For $g \in G$,
\begin{itemize}
	\item $\mathsf v_g (S) = | \{ i \in [1,\ell] \colon  g_i = g \} |$ denotes the {\it multiplicity} of $g$ in $S$,
	
	\smallskip
	\item $\supp(S) = \{ g \in G \colon \mathsf v_{g} (S) > 0 \}$ denotes the {\it support} of $S$,
	
	\smallskip
	\item $\mathsf h (S) = \max \{ \mathsf v_g (S) \colon g \in G \}$ denotes the {\it height} of $S$,

	\smallskip
	\item $|S|=\ell=\sum_{g\in G}\mathsf v_g(G)$ denotes the {\it length} of $S$, and 
	
	\smallskip
	\item $\mathsf k(S)=\sum_{i=1}^{\ell}\frac{1}{\ord(g_i))}$ denotes the {\it cross number } of $S$.
	
\end{itemize}
A {\it subsequence} $T$ of $S$ is a divisor of $S$ in $\mathcal F (G)$ and we write $T \mid S$. A {\it proper subsequence} $T$ of $S$ is a sequence of $S$ with $T\neq S$.
 Let $G_0 \subset G$ be a subset. we denote by $S_{G_0}$ the subsequence of $S$ consisting of all terms from $G_0$ and if $T \t S$, we let $S \bdot T^{[-1]}$ be the subsequence of $S$ obtained by removing the terms of $T$ from $S$.

Let $S=g_1\bdot \ldots \bdot g_{\ell} \in \mathcal F (G)$, where $\ell\in \N$ and $g_1,\ldots, g_{\ell}\in G$. We denote by
$$
\pi(S) \, = \, \{ g_{\tau (1)} \ldots  g_{\tau (\ell)} \in G \colon  \tau \mbox{ a permutation of $[1, \ell]$} \} \, \subset \, G \quad \und \quad \Pi (S) \, = \, {\bigcup_{T\neq 1_{\mathcal F(G)}\text{ and }T \t S}} \pi (T) \, \subset \, G \,,
$$
the {\it set of products} and {\it subsequence products} of $S$. In particular, if $\pi(S)$ has only one element (for example, $G$ is abelian), we denote the singleton by $\sigma(S)$. 

 Note that if $S = 1_{\mathcal F (G)}$ is empty, we use the convention  $\pi (S) = \{ 1_G \}$. The sequence $S$ is called
\begin{itemize}
	\item a {\it product-one sequence} if $1_G \in \pi (S)$,
	
	\smallskip
	\item {\it product-one free} if $1_G \notin \Pi (S)$,
	
	\smallskip
	\item a {\it minimal product-one free} if $S$ is nonempty,  $1_G \in \pi (S)$, and $1_G\not\in \pi(T)$ for any nonempty proper subsequence $T$ of $S$, and
	
	\smallskip
	\item {\it square-free} if $\mathsf h (S) \le 1$.
\end{itemize}
If $S = g_1 \bdot \ldots \bdot g_{\ell}$ is a product-one sequence with $1_G = g_1 \ldots g_{\ell}$, then $1_G = g_i \ldots g_{\ell}g_1 \ldots g_{i-1}$ for every $i \in [1, \ell]$.
Every group homomorphism $\theta : G \rightarrow H$ can extend to a monoid homomorphism $\theta : \mathcal F (G) \rightarrow \mathcal F (H)$, where $\theta (S) = \theta (g_1)\bdot \ldots \bdot \theta (g_{\ell})$.
Then $\theta (S)$ is a product-one sequence if and only if $\pi(S) \cap \ker(\theta) \neq \emptyset$.

\begin{definition}
	Let $G$ be a finite group. We define 
	\begin{itemize}
		\item the small Davenport constant of $G$ 
		\[
		\mathsf d (G) \, = \, \sup \big\{ |S| \colon S \in \mathcal F (G) \mbox{ is product-one free } \big\}\,,
		\]
		
		\item the small cross number of $G$
		\[
		\mathsf k (G) \, = \, \sup \big\{ \mathsf k(S) \colon S \in \mathcal F (G) \mbox{ is product-one free } \big\} \,,
		\]
		
		\item the  cross number of $G$
		\[
		\mathsf K (G) \, = \, \sup \big\{ \mathsf k(S) \colon S \in \mathcal F (G) \mbox{ is a minmial product-one sequence } \big\} \,.
		\]
		\end{itemize}
	
\end{definition}

Note that the definition of $\mathsf d(G)$ here is equivalent to the definition in Section $1$ and the cross numbers are a crucial tool in the study of combinatorial factorization theory. It is easy to see that $\mathsf K(G)>\mathsf k(G)$.
For more on the small Davenport constant, on the small cross number, on the differences between the (large) Davenport
constant and the small davenport constant, and on the differences between the (large) cross number and the small cross number, we refer to \cite{Ge-HK06a, Ge09a, Cz-Do-Sz18, Cz-Do-Ge16a, Ge-Gr13a,  Gr13b}.

\bigskip
\section{On cyclic groups}
\bigskip

In this section, we deal with cyclic groups. Note that $\mathsf {ti}(G)=|G|$ if $G$ is a finite cyclic group. We will use this result many times without further mention. To characterize the structure of sequences without tiny product-one subsequences, we need the following definition.
%
%

\begin{definition}\label{de1}
	Let $G$ be a finite cyclic group of order $n$ and let $S$ be a nonempty sequence over $G$. 
	For every $g\in G$ with $\ord(g)=n$, we have $S=g^{n_1}\bdot g^{n_2}\bdot \ldots\bdot g^{n_{\ell}} $, where $\ell\in \N$, and  $n_1,\ldots, n_{\ell}\in [1, n]$.	
 We say $S$ is {\it $g$-smooth} if $n_1+\ldots +n_{\ell}<n$ and $\Pi(S)=\{g, g^2, \ldots, g^{n_1+\ldots+n_{\ell}}\}$. 
 We say $S$ is {\it smooth} if there exists $g\in G$ with $\ord(g)=n$ such that $S$ is $g$-smooth. In particular, every smooth sequence is product-one free and every $g$-smooth sequence $|S|$ of length $n-1$ must be the form $S=g^{[n-1]}$. For more on smooth sequences, we refer to \cite[Section 5.2]{Ge09a}.
		\end{definition}

%
%
%

\begin{lemma}\label{le1}
	Let $G$ be a finite cyclic group of order $n\ge 3$ and let $S=g^{n_1}\bdot \ldots\bdot g^{n_{\ell}}$ be a sequence over $G$, where $\ell\in \N$, $n_1,\ldots, n_{\ell}\in [1,n]$, and $g\in G$ with $\ord(g)=n$.
	\begin{enumerate}
		\item $\mathsf k(S)\le \frac{n_1+\ldots+n_{\ell}}{n}$. In particular, if $S$ is smooth, then $\mathsf k(S)<1$.
		
		\item If $S$ is a smooth sequence over $G$ and there exists $h\in G$ such that $S\bdot h$ is not product-one free,  then $S\bdot h$ has a tiny product-one subsequence.
	\end{enumerate}
\end{lemma}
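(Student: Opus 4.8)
The plan is to prove the two parts in sequence, using the explicit description of smooth sequences from Definition~\ref{de1}.

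For part (1), write $S = g^{n_1} \bdot \ldots \bdot g^{n_\ell}$ with each $n_i \in [1,n]$ and $\ord(g) = n$. The point is that for each term $g^{n_i}$ we have $\ord(g^{n_i}) = n / \gcd(n_i, n)$, hence $\frac{1}{\ord(g^{n_i})} = \frac{\gcd(n_i, n)}{n} \le \frac{n_i}{n}$. Summing over $i \in [1,\ell]$ gives $\mathsf{k}(S) = \sum_{i=1}^\ell \frac{1}{\ord(g^{n_i})} \le \frac{n_1 + \ldots + n_\ell}{n}$, which is the first claim. If in addition $S$ is smooth, then by definition there is a generator $g$ of $G$ with $n_1 + \ldots + n_\ell < n$, so $\mathsf{k}(S) < 1$.

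For part (2), suppose $S$ is smooth, say $g$-smooth for a generator $g$, so with the notation above $m := n_1 + \ldots + n_\ell \le n-1$ and $\Pi(S) = \{g, g^2, \ldots, g^m\}$. Suppose $h \in G$ is such that $S \bdot h$ is not product-one free, i.e.\ $1_G \in \Pi(S \bdot h)$. Write $h = g^a$ for some $a \in [1,n]$. There are two cases. If $1_G \in \Pi(S)$ already, this contradicts smoothness (smooth sequences are product-one free, as noted in Definition~\ref{de1}), so in fact $1_G$ arises only from a subsequence using the term $h$: there is a subsequence $T \t S$ (possibly empty) with $g^a \cdot \sigma(T) = 1_G$, i.e.\ $\sigma(T) = g^{n-a}$ (reading the exponent mod $n$). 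If $T$ is empty then $a = n$, i.e.\ $h = 1_G$, and then $h$ alone is a tiny product-one sequence ($\mathsf{k}(h) = \frac{1}{\ord(1_G)} = 1$), so $S \bdot h$ has a tiny product-one subsequence. Otherwise $T$ is a nonempty subsequence of $S$, so $\sigma(T) \in \Pi(S) = \{g, \ldots, g^m\}$; writing $\sigma(T) = g^s$ with $s \in [1,m]$ we get $a \equiv n - s \pmod n$, so since $a \in [1,n]$ we have $a = n - s$. Now consider the product-one subsequence $T \bdot h$ of $S \bdot h$. By part~(1) applied to $T$ (which is smooth as a subsequence, or directly: $T = g^{k_1} \bdot \ldots \bdot g^{k_r}$ with $\sum k_i \le m$ and $\sigma(T) = g^s$ forces $\sum k_i \ge s$, so we may estimate), $\mathsf{k}(T) \le \frac{k_1 + \ldots + k_r}{n}$; and $\mathsf{k}(h) = \frac{\gcd(a,n)}{n} \le \frac{a}{n} = \frac{n-s}{n}$. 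Hence $\mathsf{k}(T \bdot h) \le \frac{(k_1 + \ldots + k_r) + (n - s)}{n}$, and the main task is to bound $k_1 + \ldots + k_r$ by $s$.

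The key observation — and the place I expect the real work to be — is that the $g$-smoothness of $S$ forces any subsequence $T \t S$ with $\sigma(T) = g^s$ to satisfy $k_1 + \ldots + k_r = s$ (not merely $\ge s$), i.e.\ there is no "wrap-around" modulo $n$ among the exponents of a subsequence. Indeed, if some subsequence had exponent-sum $\ge n$ while representing $g^s$ with $s \le m \le n-1$, one could extract from it, step by step, a proper subsequence whose exponent-sum equals $n$ exactly — contradicting $m = \sum_i n_i \le n-1$, since that exponent-sum $n$ subsequence would be a nonempty product-one subsequence of $S$, contradicting that $S$ is product-one free; more carefully, the condition $\Pi(S) = \{g, \ldots, g^m\}$ with $m \le n-1$ says precisely that every nonempty subsequence's exponent-sum, reduced mod $n$, lands in $[1,m]$ and equals the true exponent-sum. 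Granting this, $k_1 + \ldots + k_r = s$, so $\mathsf{k}(T \bdot h) \le \frac{s + (n-s)}{n} = 1$, and since $T \bdot h$ is product-one, it is a tiny product-one subsequence of $S \bdot h$. The main obstacle is thus the bookkeeping that smoothness rules out exponent wrap-around; everything else is the elementary cross-number estimate from part~(1).
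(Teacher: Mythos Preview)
Your proof is correct and follows essentially the same approach as the paper's. Part~(1) is identical. For part~(2), the paper streamlines your argument: it picks a \emph{minimal} product-one subsequence of $S\bdot h$, which must contain $h$ since $S$ is product-one free, writes it as $g^m\bdot\prod_{i\in I}^{\bullet} g^{n_i}$ with $h=g^m$, and observes in one line that $m+\sum_{i\in I}n_i=n$ (since the sum is divisible by $n$, positive, and $<2n$ because $\sum_{i\in I}n_i\le\sum_{i=1}^\ell n_i<n$ by $g$-smoothness). Your ``no wrap-around'' paragraph is exactly this observation, just stated more laboriously; the separate case $h=1_G$ is absorbed by allowing $I=\emptyset$.
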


\begin{proof}
	1. 
	For every $i\in [1,\ell]$, we have $\ord(g^{n_i})=\frac{n}{\gcd(n_i,n)}$ and hence $\frac{1}{\ord(g^{n_i})}=\frac{\gcd(n_i,n)}{n}\le \frac{n_i}{n}$. Therefore $\mathsf k(S)=\sum_{i=1}^{\ell}\frac{1}{\ord(g^{n_i})}\le \frac{n_1+\ldots +n_{\ell}}{n}$. In particular, if $S$ is smooth, to simplify the notation, we may assume that $S$ is $g$-smooth and hence $n_1+\ldots+n_{\ell}<n$. Then the assertion follows.
	
2. 	To simplify the notation, we may assume $S$ is $g$-smooth. There exists $m\in [1,n]$ such that $h=g^m$.
 Since $S$ is product-one free, there exists a subset $I\subset [1,\ell]$ such that $g^m\bdot \prod_{i\in I}^{\bullet}g^{n_i}$ is a minimal product-one sequence, whence $m+\sum_{i\in I}n_i=n$. In view of $\mathsf k(g^m\bdot \prod_{i\in I}^{\bullet}g^{n_i})\le \frac{m}{n}+\sum_{i\in I}\frac{n_i}{n}=1$, we obtain $S\bdot h$ has a tiny product-one subsequence.
\end{proof}

\begin{theorem}\label{t3.2}
	Let $G$ be a cyclic group with order $n\ge 3$ and  
	let $S\in\mathcal F(G)$ with $|S|\ge \frac{n+1}{2}$. If $S$ is product-one free, then  $S$ is smooth. In particular, if $|S|=n-1$, then $S=g^{[n-1]}$ for some $g\in G$ with $\ord(g)=n$.
	\end{theorem}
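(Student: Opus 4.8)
The statement is, modulo the language of smoothness, the product-one free case of the structure theorem of Savchev and Chen for long sequences over cyclic groups, so the plan is to reduce it to that theorem. First I would record the elementary fact that for $n\ge 3$ the hypothesis $|S|\ge\frac{n+1}{2}$ is equivalent to $|S|\ge\lfloor n/2\rfloor+1$, i.e.\ to $|S|>n/2$, which is exactly the Savchev--Chen threshold. I then invoke the result in the form (see \cite[Section 5.2]{Ge09a}): \emph{a product-one free sequence $S$ over a cyclic group $G$ of order $n$ with $|S|>n/2$ can be written as $S=g^{n_1}\bdot\ldots\bdot g^{n_\ell}$ for some $g\in G$ with $\ord(g)=n$, where $\ell=|S|$, $n_i\in[1,n-1]$, and $n_1+\ldots+n_\ell\le n-1$.} If the version one cites already records that $\Pi(S)=\{g,g^2,\ldots,g^{n_1+\ldots+n_\ell}\}$, then $S$ is $g$-smooth by Definition \ref{de1} and the first assertion is finished.

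If it does not, the remaining point is purely arithmetic. Put $m=n_1+\ldots+n_\ell<n$; the inclusion $\Pi(S)\subseteq\{g,\ldots,g^{m}\}$ is immediate, and I must show that the subset sums of the multiset $\{n_1,\ldots,n_\ell\}$ exhaust $[0,m]$, which forces the reverse inclusion. After reordering so that $n_1\le\ldots\le n_\ell$, suppose this covering fails; by the standard criterion there is then a least $k\in[0,\ell-1]$ with $n_{k+1}\ge 2+\sum_{i=1}^{k}n_i$ (for $k=0$ this reads $n_1\ge 2$). The terms $n_{k+1},\ldots,n_\ell$ are then each at least $k+2$, while $n_1+\ldots+n_k\ge k$, and a one-line estimate gives
\[
n-1\ \ge\ \sum_{i=1}^{\ell}n_i\ \ge\ k+(\ell-k)(k+2)\ =\ 2\ell+k(\ell-k-1)\ \ge\ 2\ell\ =\ 2|S|\ \ge\ n+1,
\]
a contradiction. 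Hence the covering holds, $S$ is $g$-smooth, and in particular smooth.

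For the ``in particular'' clause, if $|S|=n-1$ then, with $g$ and the $n_i$ as above, $\ell=|S|=n-1$ and $n-1\ge\sum_{i=1}^{\ell}n_i\ge\ell=n-1$, so all inequalities are equalities and every $n_i=1$; thus $S=g^{[n-1]}$ with $\ord(g)=n$, as claimed.

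The genuine difficulty is concentrated entirely in the Savchev--Chen structure theorem itself, whose proof proceeds by an intricate induction on $n$ combined with a close analysis of the partial sums of $S$; by contrast, the two reductions above — the subset-sum covering and the case $|S|=n-1$ — are elementary counting arguments.
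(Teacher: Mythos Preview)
Your proposal is correct and takes essentially the same approach as the paper: the paper's proof is nothing more than a citation to Savchev--Chen \cite{Sa-Ch07}, Yuan \cite{Yu07}, and \cite[Theorem 5.1.8]{Ge09a}, and you likewise reduce everything to the Savchev--Chen structure theorem. Your added subset-sum covering argument and the derivation of the $|S|=n-1$ case are correct elementary supplements that the paper simply leaves implicit in the cited references.
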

\begin{proof}
This was proved independently by Savchev-Chen \cite{Sa-Ch07} and by Yuan \cite{Yu07}. A proof  is also provided in the book \cite[Theorem 5.1.8]{Ge09a}.
\end{proof}

\begin{lemma}\label{l3.3}
	Let $G$ be a cyclic group with order $n=p^{\ell}\ge 3$ is a prime power.
	
	\begin{enumerate}
		\item $\mathsf K(G)=1$ and hence every minimal product-one sequence is a tiny product-one sequence.
		
		\item 	Let $S\in\mathcal F(G)$ with $|S|\ge \frac{n+1}{2}$. If $S$ has no tiny product-one sequence, then  $S$ is smooth.

		\item Let $S\in \mathcal F(G)$ with $\mathsf k(S)\ge t$, where $t\in \N$. Then $S$ has at least $t$ pairwise disjoint tiny product-one subsequences.
	\end{enumerate}

\end{lemma}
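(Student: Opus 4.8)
The three statements are nested, so the plan is to prove~(1) first and then deduce~(2) and~(3) from it. For~(1), note $\mathsf K(G)\ge 1$ is witnessed by $g^{[n]}$ for any generator $g$ of $G$, so the real content is that $\mathsf k(S)\le 1$ for every minimal product-one sequence $S$ over $G=C_{p^\ell}$; I would prove this by induction on $\ell$. If $1_G\mid S$ then minimality forces $S=1_G^{[1]}$ with $\mathsf k(S)=1$, so assume $1_G\nmid S$ throughout. For $\ell=1$ every nonzero element of $C_p$ has order $p$, whence $\mathsf k(S)=|S|/p\le(\mathsf d(C_p)+1)/p=1$. For $\ell\ge 2$, let $H\le G$ be the subgroup of order $p$ and split $S=S_H\bdot T$, where $S_H$ collects the terms of $S$ lying in $H$ (each of order $p$) and $T$ collects the remaining terms (each of order $\ge p^2$). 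Since $\langle g\rangle\supseteq H$ for every term $g\mid T$, the canonical epimorphism $\psi\colon G\to G/H\cong C_{p^{\ell-1}}$ satisfies $\ord(\psi(g))=\ord(g)/p$ for all $g\mid T$, so $\mathsf k(T)=\tfrac1p\,\mathsf k(\psi(T))$. The sequence $\psi(T)$ is product-one over $C_{p^{\ell-1}}$ (because $\sigma(S)=1_G$ and $\sigma(S_H)\in H$), so I would factor $\psi(T)=T_1\bdot\ldots\bdot T_r$ into minimal product-one sequences, with preimages $\widetilde T_1,\ldots,\widetilde T_r$ partitioning $T$.

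There are then two cases. If $S_H$ is empty and $r=1$, then $\psi(S)$ is itself a minimal product-one sequence and $\mathsf k(S)=\tfrac1p\,\mathsf k(\psi(S))\le\tfrac1p\,\mathsf K(C_{p^{\ell-1}})\le 1$ by induction. Otherwise each $\widetilde T_i$ is a proper nonempty subsequence of $S$, so minimality of $S$ forces $\sigma(\widetilde T_i)\ne 1_G$, and the key observation is that $W:=S_H\bdot\sigma(\widetilde T_1)\bdot\ldots\bdot\sigma(\widetilde T_r)$ is a product-one sequence over $H\cong C_p$ with all terms nontrivial which is moreover \emph{minimal}: a vanishing proper nonempty sub-sum of $W$ would pull back to a vanishing proper nonempty subsequence of $S$. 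Hence $|S_H|+r=|W|\le\mathsf d(C_p)+1=p$, and
\[
\mathsf k(S)=\frac{|S_H|}{p}+\frac1p\,\mathsf k(\psi(T))=\frac{|S_H|}{p}+\frac1p\sum_{i=1}^{r}\mathsf k(T_i)\le\frac{|S_H|}{p}+\frac rp\le 1\,,
\]
using $\mathsf k(T_i)\le\mathsf K(C_{p^{\ell-1}})\le 1$ by induction. This proves $\mathsf K(G)=1$, and then every minimal product-one sequence $S$ is nonempty with $0<\mathsf k(S)\le\mathsf K(G)=1$, hence tiny.

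Part~(2) is immediate from~(1): every minimal product-one subsequence of $S$ is tiny, and every nonempty product-one subsequence contains a minimal one, so a sequence with no tiny product-one subsequence is product-one free; since $n\ge 3$ and $|S|\ge\frac{n+1}{2}$, Theorem~\ref{t3.2} then gives that $S$ is smooth. For part~(3), from~(1) we get $\mathsf k(G)<\mathsf K(G)=1$, so any $S'\in\mathcal F(G)$ with $\mathsf k(S')\ge 1$ fails to be product-one free and hence contains a minimal product-one subsequence, which by~(1) is tiny and has cross number $\le 1$. I would then extract such subsequences from $S$ greedily: once pairwise disjoint $T_1,\ldots,T_{j-1}$ with $j-1\le t-1$ have been removed, the remaining sequence still has cross number $\ge t-(j-1)\ge 1$, so a further one $T_j$ can be removed; after $t$ steps this produces $t$ pairwise disjoint tiny product-one subsequences of $S$.

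The main obstacle is the upper bound in~(1): the induction relies on the dichotomy for $\psi(T)$ and, in the main case, on verifying that the auxiliary $C_p$-sequence $W$ is genuinely \emph{minimal} product-one (so that $|W|\le p$), which is exactly where the pull-back bookkeeping with the preimages $\widetilde T_i$ enters. Parts~(2) and~(3) are then short formal consequences of~(1), Theorem~\ref{t3.2}, and the elementary inequality $\mathsf k(G)<\mathsf K(G)$.
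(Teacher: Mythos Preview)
Your proof is correct. For parts~(2) and~(3) you do exactly what the paper does: reduce to part~(1), invoke Theorem~\ref{t3.2} for~(2), and use greedy extraction for~(3) (the paper phrases~(3) as ``take a maximal family and bound the remainder'', but this is the same argument).

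The genuine difference is in part~(1). The paper simply cites \cite[Theorem~5.1.14]{Ge09a} for $\mathsf K(C_{p^\ell})=1$, whereas you supply a self-contained inductive proof via the quotient $\psi\colon G\to G/H$ with $|H|=p$. Your argument is sound: the key step, that the auxiliary sequence $W=S_H\bdot\sigma(\widetilde T_1)\bdot\ldots\bdot\sigma(\widetilde T_r)$ over $H\cong C_p$ is a \emph{minimal} product-one sequence, is exactly the pull-back you describe (a proper nonempty product-one subsequence of $W$ lifts to one of $S$), and the bound $|S_H|+r\le p$ then gives $\mathsf k(S)\le 1$. One small point worth making explicit: your induction needs $\mathsf K(C_p)=1$ also for $p=2$ (to handle $n=4$), which lies outside the lemma's hypothesis $n\ge 3$ but is of course true. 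What your approach buys is self-containment at the cost of length; what the citation buys is brevity at the cost of an external dependence. Both are legitimate, and your version is in fact a standard way to prove the cited result.
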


\begin{proof}
1. follows from   \cite[Theorem 5.1.14]{Ge09a} and the definition of tiny product-one sequences.

2. If $S$ has no tiny product-one subsequences, then by 1. $S$ has no product-one subsequences. The assertion follows by Theorem \ref{t3.2}.	

3. Suppose $S=T_1\bdot T_2\bdot \ldots\bdot T_s\bdot T'$, where $T_i, i\in [1,s]$ are tiny product-one subsequences of $S$ and $T'$ is a subsequence that has no tiny product-one subsequences. It follows by 1. that $\mathsf k(T_i)\le 1$ for all $i\in [1,s]$ and $T'$ is product-one free, whence $\mathsf k(S)\le s+\mathsf k(T')<s+1$. The assertion follows by $\mathsf k(S)\ge t$.	
	\end{proof}

\begin{theorem}\label{t3.1}
	Let $G$ be a cyclic group of order $n=mq^r\ge 3$, where $m,r\in \N$ and $q$ is a prime with $\gcd(m,q)=1$,  and let $S$ be a sequence of length 
	$|S|\ge 
	\frac{n+1}{2}+(r+1)(m-1)=\frac{n+1}{2}+\frac{r+1}{q^r}n-(r+1)$.
	If $S$ has no tiny product-one subsequences, then $S$ is smooth.
\end{theorem}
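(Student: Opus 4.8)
The plan is to reduce everything to Theorem~\ref{t3.2}: since $|S|\ge\frac{n+1}{2}$, it suffices to prove that $S$ is product-one free over $G$, and that is the whole content. If $m=1$ then $n=q^r$ is a prime power and the claim is exactly Lemma~\ref{l3.3}(2), so I assume $m\ge2$; note that $S$ has at most $n-1$ terms (otherwise it has a tiny product-one subsequence, as $\mathsf{ti}(C_n)=n$), so $\frac{n+1}{2}+(r+1)(m-1)\le|S|\le n-1$, and in particular $|S|>q^r-1=\mathsf d(C_{q^r})$.

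First I pass to the prime-power quotient. Let $K\le G$ be the subgroup of order $m$ and let $\psi\colon G\to\overline G:=G/K$; since $\gcd(m,q)=1$, $\overline G$ is cyclic of order $q^r$ and $\ord(\psi(g))\t\ord(g)$ for each $g\in G$, whence $\mathsf k(T)\le\mathsf k(\psi(T))$ for every $T\t S$. Greedily write $S=T_1\bdot\ldots\bdot T_k\bdot S_0$ with each $\psi(T_i)$ a minimal product-one sequence over $\overline G$ and $\psi(S_0)$ product-one free over $\overline G$; then $|S_0|\le\mathsf d(\overline G)=q^r-1$ and each $|T_i|\le q^r$. By Lemma~\ref{l3.3}(1) every $\psi(T_i)$ is tiny over $\overline G$, so $\mathsf k(T_i)\le\mathsf k(\psi(T_i))\le1$; hence no $T_i$ can be product-one over $G$ (it would be a tiny product-one subsequence of $S$), and $\sigma(T_i)\in K\setminus\{1_G\}$ for all $i$.

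Next I would analyse the auxiliary sequence $V:=\sigma(T_1)\bdot\ldots\bdot\sigma(T_k)$ over $K\cong C_m$, alongside the product-one free sequence $\psi(S_0)$ over $C_{q^r}$. For any $I\subseteq[1,k]$ with $\prod_{i\in I}\sigma(T_i)=1_G$ the subsequence $\prod_{i\in I}^{\bullet}T_i$ of $S$ is product-one over $G$ with cross number $\sum_{i\in I}\mathsf k(T_i)$; choosing at each greedy step a $\psi(T_i)$ of least possible cross number, and using the count $|S|=|S_0|+\sum_{i=1}^{k}|T_i|\le(q^r-1)+kq^r$ (which forces $k$ large relative to $m$), I would try to locate a short such $I$ with $\sum_{i\in I}\mathsf k(T_i)\le1$, giving a tiny product-one subsequence of $S$ — contradiction. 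The case not covered this way is that $\psi(S)$ is highly concentrated (some element occurring with multiplicity close to $|S|$); after moving to a suitable generator one may then assume all terms of $S$ lie over a single generator of $\overline G$, and a tiny product-one subsequence is obtained by lifting a length-$q^r$ zero-sum pattern and applying Theorem~\ref{t3.2} over $K$. Once $S$ is shown product-one free, Theorem~\ref{t3.2} gives that it is smooth.

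I expect the main obstacle to be exactly this final step. The bound $\sum_{i\in I}\mathsf k(T_i)\le|I|$ is hopelessly weak (even a single $T_i$ only satisfies $\mathsf k(T_i)\le1$, not $\le1/m$), so the pieces $T_i$ must be chosen with care, balancing lengths against cross numbers; the concentrated behaviour of $\psi(S)$ has to be isolated and analysed separately; and one must keep track of the three thresholds $\frac{n+1}{2}$, $\frac{q^r+1}{2}$, $\frac{m+1}{2}$ at which Theorem~\ref{t3.2} becomes applicable to $S$ over $C_n$, to $\psi(S_0)$ over $C_{q^r}$, and to $V$ over $C_m$ respectively. The slack $(r+1)(m-1)$ in the hypothesis is precisely what makes this bookkeeping close and, in the extremal regime, lets one reconstruct the $g$-smooth shape of $S$ over $C_n$ from the smooth structures found on $C_{q^r}$ and on $C_m$.
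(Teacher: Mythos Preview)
Your plan has the projections running in the wrong direction, and this is not a detail that can be patched: it is exactly the point of the argument. You quotient by the subgroup $K$ of order $m$ and extract minimal (hence tiny) product-one pieces $\psi(T_i)$ over $C_{q^r}$, obtaining only $\mathsf k(T_i)\le\mathsf k(\psi(T_i))\le 1$. As you yourself note, this is hopeless: to finish you would need $\sum_{i\in I}\mathsf k(T_i)\le 1$ for some nonempty $I$ with $\prod_{i\in I}\sigma(T_i)=1_G$ in $K$, and nothing in your setup prevents each $\mathsf k(T_i)$ from being very close to $1$. The vague fallback (``choose $T_i$ of least cross number'', ``concentrated case'') does not supply the missing inequality.

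The paper reverses the roles of $m$ and $q^r$. Write $G\cong H_1\oplus H_2$ with $|H_1|=m$, $|H_2|=q^r$, and first \emph{stratify $S$ by the $q$-part of the order}: $S=T_0\bdot S_0\bdot S_1\bdot\ldots\bdot S_r$, where $S_i$ collects the terms with $\ord(\phi_2(g))=q^i$. Inside each $S_i$ one extracts a maximal family of disjoint tiny product-one subsequences $\phi_1(W_{i,j})$ over $H_1\cong C_m$; the leftover in each stratum has length $\le m-1$, which is precisely where the slack $(r+1)(m-1)$ goes. The crucial gain is that, because every term of $W_{i,j}$ has $H_2$-order exactly $q^i$,
\[
\mathsf k(W_{i,j})=\frac{\mathsf k(\phi_1(W_{i,j}))}{q^i}\le\frac{1}{q^i}\le\frac{1}{\ord(\sigma(W_{i,j}))}=\mathsf k(\sigma(W_{i,j})).
\]
Thus the auxiliary sequence $W'=S_0\bdot\prod^{\bullet}\sigma(W_{i,j})$ over $H_2\cong C_{q^r}$ has the property that any minimal product-one subsequence of $W'$ (tiny by Lemma~\ref{l3.3}(1)) lifts to a tiny product-one subsequence of $S$. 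If instead $W'$ is product-one free, then $\mathsf k(W')<1$ forces $\mathsf k(W)<1$, so the large subsequence $W$ of $S$ is itself product-one free; one then applies Theorem~\ref{t3.2} to a maximal product-one free extension $U\supseteq W$ and uses Lemma~\ref{le1}(2) to conclude $U=S$. In short: extract tiny pieces over $C_m$ (stratified by $q$-order), not over $C_{q^r}$; that is what makes the cross-number inequality point the right way.
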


\begin{proof}
	Suppose $m=q_1^{r_1}\ldots q_t^{r_t}$, where $t\in \N_0$, $r_1,\ldots, r_t\in \N$, and $q_1,\ldots, q_t$ are pairwise distinct primes. If $t=0$, then $m=1$ and hence the assertion follows by Lemma \ref{l3.3}.2.

	Suppose $t\ge 1$ and
	suppose $G\cong H_1\oplus H_2$, where $H_1, H_2$ are  cyclic subgroups  with $|H_1|=m$ and $|H_2|=q^{r}$. Let $\phi_1\colon G\rightarrow H_1$ and $\phi_2\colon G\rightarrow H_2$ be projections from $G$ to $H_1$ and $H_2$. Then the sequence $S$ has a decomposition $$S=T_0\bdot S_0\bdot S_1\bdot \ldots\bdot S_{r}\,,$$
	where $S_0$ consists by terms $g$ with $\phi_1(g)=0$, $T_0$ consists by terms $g$ with $\phi_2(g)=0$, and $S_i$, $1\le i\le r$, consists by terms $g$ with $\ord(\phi_2(g))=q^{i}$. For each $i\in [1, r]$ we choose all pairwise disjoint tiny product-one subsequences $\phi_1(W_{i,1}), \ldots, \phi_1(W_{i,k_i})$ of $\phi_1(S_i)$. Then $\phi_1(T_i)=\phi_1(S_i\bdot (W_{i,1}\bdot \ldots W_{i,k_i})^{[-1]})$ has no tiny product-one subsequences, whence $|T_i|=|\phi_1(T_i)|\le m-1$. Since $T_0$ is a sequence over $H_1$ and has no tiny product-one subsequence, we obtain $|T_0|\le m-1$. Let 
	\begin{align*}
	W&=S_0\bdot W_{1,1}\bdot \ldots\bdot  W_{1,k_1}\bdot \ldots \bdot W_{r,1}\bdot \ldots \bdot W_{r, k_{r}}\in \mathcal B(G)\,,\\
\text{and} \quad	W'&=S_0\bdot \sigma(W_{1,1})\bdot\ldots\bdot \sigma(W_{1,k_1})\bdot \ldots \bdot \sigma(W_{r,1})\bdot \ldots \bdot \sigma(W_{r, k_{r}})\in \mathcal B(H_2)\,.
	\end{align*}
	Then $|W|\ge |S|-(r+1)(m-1)\ge \frac{n+1}{2}$	and 
	$|W'|\ge \left\lceil\frac{|W|}{m}\right\rceil\ge \frac{q^r+1}{2}$. Note that 
 for each $(i,j)\in \big\{(i,j_i)\colon i\in [1, r] \text{ and } j_i\in [1, k_i]\big\}$, we have $$\mathsf k(W_{i,j})=\frac{\mathsf k(\phi_1(W_{i,j}))}{q^i}\le \frac{1}{q^i}\le \frac{1}{\sigma(W_{i,j})}=\mathsf k(\sigma(W_{i,j}))\,.$$
	
	If $W'$ has a minimal product-one subsequnce  $W_0'$, it follows by Lemma \ref{l3.3}.1 that $W_0'$ is a tiny product-one subseqeunce over $H_2$, say $W_0'=V_0\bdot \prod_{(i,j)\in I}^{\bullet} \sigma(W_{i,j})$, where $V_0$ is a subsequence of $S_0$ and $I\subset \big\{(i,j_i)\colon i\in [1, r] \text{ and } j_i\in [1, k_i]\big\}$. 
	Then  $W_0=S_0\bdot \prod_{(i,j)\in I}^{\bullet} W_{i,j}$ is a  product-one subsequence of $S$ and $\mathsf k(W_0)\le \mathsf k(W_0')\le 1$,
	a contradiction.
	
	If $W'$ is product-one free, it follows by Theorem \ref{t3.2} that  $W'$ is smooth over $H_2$ and hence by Lemma \ref{le1} $\mathsf k(W')< 1$. Therefore $\mathsf k(W)\le \mathsf k(W')<1$. If $W$ has a nonempty product-one subsequence $W_0$, then $\mathsf k(W_0)\le \mathsf k(W)< 1$ and hence $W_0$ is a tiny product-one subsequence, a contradiction . If $W$ is product-one free, let $U$ be the maximal product-one free subsequence of $S$ with $W\mid U$.
		It follows by Theorem \ref{t3.2} and $|U|\ge |W|\ge \frac{n+1}{2}$ that $U$ is smooth over $G$.  Assume to the contrary that $U\neq S$. Let $h\in \supp(S\bdot U^{[-1]})$. Then $U\bdot h$ is not product-one free and it follows by Lemma \ref{le1} that $U\bdot h$ has a tiny product-one subsequence, a contradiction.
\end{proof}

\begin{proof}[Proof of Theorem \ref{th1}]
		Suppose $n=|G|=q_1^{r_1}\ldots q_t^{r_t}$, where $t\in \N$, $r_1,\ldots, r_t\in \N$, and $q_1,\ldots, q_t$ are pairwise distinct primes.  Suppose $q^r=\max\{q_1^{r_1}, \ldots, q_t^{r_t}\}$ and $m=\frac{n}{q^r}$.
		Since $\lim_{n\rightarrow \infty}\frac{r+1}{q^r}=0$, for every real $\epsilon>0$, there exists $N_{\epsilon}\in \N$ such that for all $n\ge N_{\epsilon}$,  we have $\frac{r+1}{q^r}\le \epsilon$.
		Now the assertion follows by Theorem \ref{t3.1}.
\end{proof}

\medskip
\begin{proposition}\label{c3.2}
	Let $G$ be a cyclic group of order $n\ge 3$  and let $S$ be a sequence of length  $|S|\ge \frac{9n}{10}$.  If  $S$ has no tiny product-one subsequence, then $S$ is smooth.
\end{proposition}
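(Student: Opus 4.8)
The plan is to derive the statement from Theorem~\ref{t3.1} for all but finitely many $n$ and to settle the remaining values by hand. If $n$ is a prime power, then $\tfrac{9n}{10}\ge\tfrac{n+1}{2}$ (equivalently $4n\ge5$), so the conclusion is immediate from Lemma~\ref{l3.3}.2. Suppose now that $n$ is not a prime power, pick a prime power $q^{r}$ with $q^{r}\parallel n$, and set $m=n/q^{r}$. Theorem~\ref{t3.1} applies as soon as $\tfrac{n+1}{2}+(r+1)(m-1)\le\tfrac{9n}{10}$, and clearing denominators shows this is equivalent to $\tfrac{r+1}{q^{r}}\le\tfrac25+\tfrac{2r+1}{2n}$; in particular it holds whenever $\tfrac{r+1}{q^{r}}\le\tfrac25$.

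The next step is to exhibit a good prime power. A short check shows $\tfrac{r+1}{q^{r}}>\tfrac25$ occurs only for $q^{r}\in\{2,3,4,8\}$, and that for $q^{r}=8$ (so $r=3$) the weaker inequality $\tfrac12\le\tfrac25+\tfrac{7}{2n}$ still holds provided $n\le35$. Hence a non-prime-power $n$ fails to admit a usable prime power only if every prime-power component of $n$ lies in $\{2,3,4,8\}$; such an $n$ equals $2^{a}\cdot3$ with $a\in\{1,2,3\}$, that is, $n\in\{6,12,24\}$, and $n=24$ is covered by the choice $q^{r}=8$ (as $24\le35$). Thus Theorem~\ref{t3.1} proves the proposition for every $n\ge3$ with $n\notin\{6,12\}$.

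It remains to treat $n\in\{6,12\}$. For $n=6$ there is nothing to prove, since $|S|\ge\lceil27/5\rceil=6=\mathsf{ti}(C_{6})$ and so the hypothesis is never satisfied. For $n=12$ the hypothesis forces $|S|=11=\mathsf d(C_{12})$ (the case $|S|=12=\mathsf{ti}(C_{12})$ being vacuous), and there smoothness just means $S=g^{[11]}$ for a generator $g$. Here I would argue directly, using the rigidity imposed by the assumption: the identity does not occur in $S$; an element of order $d$ occurs at most $d-1$ times; and for any two distinct elements of the same order whose product is $1_{G}$, at most one of them occurs in $S$ (the pair being itself a tiny product-one sequence). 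These constraints already give $|\supp(S)|\le6$, and a case analysis of how the remaining multiplicities can sum to $11$ --- using that a low-order element occurring with high multiplicity rules out many other elements --- shows that $S$ must be product-one free, whence Theorem~\ref{t3.2} gives $S=g^{[11]}$.

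I expect $n=12$ to be the main obstacle: it is the unique $n\ge3$ that the prime-power reduction misses and that is not rendered vacuous by $\mathsf{ti}(C_{n})=n$. The delicate point is that, unlike for prime powers (Lemma~\ref{l3.3}.1), $C_{12}$ carries minimal product-one sequences of cross number exceeding $1$, so that ``no tiny product-one subsequence'' does not by itself force ``product-one free''; one genuinely has to exploit the length $|S|=11$.
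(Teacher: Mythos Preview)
Your reduction via Theorem~\ref{t3.1} is correct and essentially the same as the paper's, and your disposal of the prime-power case and of $n=6$ is fine. The genuine gap is at $n=12$. You correctly identify it as the crux, but what you write there is only a sketch: the constraints you list (no identity, $\mathsf v_g(S)\le\ord(g)-1$, at most one element from each inverse pair) give only $|\supp(S)|\le 6$ and a multiplicity budget far exceeding $11$, so by themselves they do not force product-one-freeness. The phrase ``a case analysis \ldots\ shows that $S$ must be product-one free'' is precisely where the work lies, and you have not carried it out. The extra exclusions you allude to (short tiny product-one subsequences mixing elements of different orders, e.g.\ $g^6\bdot g^3\bdot g^3$ with cross number $1$) proliferate quickly and would have to be enumerated and combined with care.

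The paper treats $n=12$ by a different and more systematic method. It decomposes $C_{12}\cong H_1\oplus H_2$ with $|H_1|=4$, $|H_2|=3$, splits $S=T_0\bdot S_0\bdot S_1$ according to which projection vanishes, bounds $|T_0|\le 3$ and $|S_0|\le 2$, and then runs three cases on $|S_0|\in\{0,1,2\}$. In each case it either uses Lemma~\ref{l3.3}.3 to extract enough disjoint tiny product-one subsequences of $\phi_1(S_1)$ that, after replacing each by its sum, a sequence of length $\ge|H_2|$ over $H_2$ arises (yielding a tiny product-one subsequence of $S$), or it shows $\mathsf k$ of a subsequence of length $\ge 8$ is $<1$, so any product-one subsequence would already be tiny; this produces a product-one-free subsequence of length $\ge\tfrac{n+1}{2}$, to which Theorem~\ref{t3.2} and then Lemma~\ref{le1}.2 apply to conclude that $S$ itself is smooth. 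Your direct route may also be viable, but as written it is incomplete; the paper's CRT-based argument is the concrete template for closing the gap.
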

\begin{proof} Suppose  $S$ has no tiny product-one subsequence and 
		suppose $n=q_1^{r_1}\ldots q_t^{r_t}$, where $t, r_1,\ldots, r_t\in \N$, and $q_1<q_2<\ldots<q_t$ are pairwise distinct primes. 
		 If $q_t\ge 5$, then $$|S|\ge \frac{9n}{10}=\frac{n+1}{2}+\frac{2}{5}n-\frac{1}{2}\ge \frac{n+1}{2}+\frac{r_t+1}{q_t^{r_t}}n-(r_t+1)\,.$$ 
	It follows by Theorem \ref{t3.1} that $S$ is smooth.
	
	Suppose $q_t\le 3$. Then $t\in [1,2]$. If $t=1$,  the assertion follows by Lemma \ref{l3.3}.2. Now we suppose $t=2$. Then $q_1=2$ and $q_2=3$.

	 If $r_2\ge 2$, then 
	 $$|S|\ge \frac{9n}{10}=\frac{n+1}{2}+\frac{2}{5}n-\frac{1}{2}\ge \frac{n+1}{2}+\frac{2+1}{3^{2}}n-\frac{1}{2}\ge \frac{n+1}{2}+\frac{r_2+1}{3^{r_2}}n-(r_2+1)\,.$$ 
	 It follows by Theorem \ref{t3.1} that $S$ is smooth.
	 
	 If $r_2=1$ and $r_1\ge 4$, then  $$|S|\ge \frac{9n}{10}=\frac{n+1}{2}+\frac{2}{5}n-\frac{1}{2}\ge \frac{n+1}{2}+\frac{4+1}{2^{4}}n-\frac{1}{2}\ge \frac{n+1}{2}+\frac{r_1+1}{2^{r_1}}n-(r_1+1)\,.$$ 
	 It follows by Theorem \ref{t3.1} that $S$ is smooth.
	 
	 If $r_2=1$ and $r_1=3$, then $n=24$ and $|S|\ge \frac{9n}{10}$ implies that $|S|\ge 22$. Since $$\frac{n+1}{2}+(r_1+1)(q_2^{r_2}-1)=25/2+4(3-1)=20.5\le 22\le |S|\,,$$ it follows by Theorem \ref{t3.1} that $S$ is smooth.
	
	  If $r_2=1$ and $r_1=1$, then $n=6$ and it follows by $|S|\ge \frac{9n}{10}$  that $|S|\ge 6=|G|$, whence $S$ has a tiny product-one subsequence, a contradiction.
	  
	  \smallskip
	  Now we deal with the only left case that   $r_2=1$ and $r_1=2$, which implies $n=12$. It follows by $|S|\ge \frac{9n}{10}$  that $|S|\ge 11$. Suppose $G\cong H_1\oplus H_2$, where $H_1, H_2$ are  cyclic subgroups  with $|H_1|=4$ and $|H_2|=3$. Let $\phi_1\colon G\rightarrow H_1$ and $\phi_2\colon G\rightarrow H_2$ be projections from $G$ to $H_1$ and $H_2$. Then the sequence $S$ has a decomposition $$S=T_0\bdot S_0\bdot S_1\,,$$
	   where $S_0$ consists by terms $g$ with $\phi_1(g)=0$, $T_0$ consists by terms $g$ with $\phi_2(g)=0$, and $S_1$ consists by terms $g$ with $\phi_1(g)\neq 0$ and $\phi_2(g)\neq 0$. 
	   If $|S_0|\ge 3=|H_2|$, then $S_0$ has a tiny product-one subsequence, a contradiction. If $|T_0|\ge 4=|H_1|$, then $T_0$ has a tiny product-one subsequence, a contradiction. Therefore $|T_0|\le 3$ and $|S_0|\le 2$. We distinguish three cases.
	   
	   \medskip
	   Suppose  $|S_0|=2$.  Then $|S_1|\ge 6\ge |H_1|$ and hence $\phi_1(S_1)$ has a tiny product-one subsequence $\phi_1(W_1)$, whence $\mathsf k(W_1)\le \frac{1}{3}\le \frac{1}{\ord(\sigma(W_1))}=\mathsf k(\sigma(W_1))$. Since $S_0\bdot \sigma(W_1)$ is a sequence over $H_2$ with length $|S_0\bdot\sigma(W_1)|=3\ge |H_2|$, we obtain $S_0\bdot \sigma(W_1)$ and hence $S_0\bdot W_1$ have a tiny product-one subsequence, a contradiction.

	    Suppose $|S_0|=1$.  Then $|S_1|\ge 7$. If $\mathsf k(\phi_1(S_1))\ge 2$, then by Lemma \ref{l3.3}.3 we can choose two disjoint tiny product-one subsequences $\phi(W_1)$, $\phi(W_2)$ of $\phi_1(S_1)$.  Since $\mathsf k(W_i)\le \mathsf k(\sigma(W_i))$ for each $i\in [1,2]$ and $S_0\bdot \sigma(W_1)\bdot \sigma(W_2)$ is a sequence over $H_2$ with length $|S_0\bdot \sigma(W_1)\bdot \sigma(W_2)|=3\ge |H_2|$, we obtain $S_0\bdot \sigma(W_1)\bdot \sigma(W_2)$ and hence $S_0\bdot W_1\bdot W_2$ have a tiny product-one subsequence, a contradiction.
	     Otherwise $\mathsf k(\phi_1(S_1))\le \frac{7}{4}$ and hence $\mathsf k(S_0\bdot S_1)\le \frac{1}{3}+\frac{7}{12}<1$. If $S_0\bdot S_1$ is not product-one free, then $S_0\bdot S_1$ has a nonempty product-one subsequence, which  is a tiny product-one subsequence,  a contradiction. Therefore  $S_0\bdot S_1$ is product-one free and 
	     let $U$ be the maximal subsequence of $S$ such that $S_0\bdot S_1$ divides $U$ and $U$ is product-one free.  It follows by $|U|\ge |S_0\bdot S_1|\ge 8$ and Theorem \ref{t3.2} that $U$ is smooth.
	      Assume to the contrary that $U\neq S$. Let $g\in \supp(S\bdot U^{[-1]})$. Then $U\bdot g$ is not product-one free. It follows by Lemma \ref{le1} that  $U\bdot g$ has a tiny product-one subsequence,  a contradiction.

	  Suppose  $|S_0|=0$.  Then $|S_1|\ge 8$. If $\mathsf k(\phi_1(S_1))\ge 3$, then we can choose three disjoint tiny product-one subsequences $\phi(W_1)$, $\phi(W_2)$, $\phi_3(W_3)$ of $\phi_1(S_1)$. 
	   Since $\mathsf k(W_i)\le \mathsf k(\sigma(W_i))$ for each $i\in [1,3]$ and $\sigma(W_1)\bdot \sigma(W_2)\bdot \sigma(W_3)$ is a sequence over $H_2$ with length $|\sigma(W_1)\bdot \sigma(W_2)\bdot \sigma(W_3)|=3\ge |H_2|$, we obtain $\sigma(W_1)\bdot \sigma(W_2)\bdot \sigma(W_3)$  and hence $ W_1\bdot W_2\bdot W_3$ have a tiny product-one subsequence, a contradiction.
 Otherwise $\mathsf k(\phi(S_1))<3$ and hence $\mathsf k(S_1)<1$.
 If $S_1$ is not product-one free, then $S_1$ has a nonempty product-one subsequence, which is a tiny product-one subsequence, a contradiction. Therefore  $S_1$ is product-one free and 
let $U$ be the maximal subsequence of $S$ such that $S_1$ divides $U$ and $U$ is product-one free.  It follows by $|U|\ge | S_1|\ge 8$ and Theorem \ref{t3.2} that $U$ is smooth.
Assume to the contrary that $U\neq S$. Let $g\in \supp(S\bdot U^{[-1]})$. Then $U\bdot g$ is not product-one free and hence by Lemma \ref{le1}  $U\bdot g$ has a tiny product-one subsequence,  a contradiction.
 \end{proof}

\begin{proposition}\label{c3.3}
	Let $G$ be a cyclic group of order $n\ge 3$  and let $S$ be a sequence of length  $|S|=n-1$. If $S$ has no tiny product-one subsequence, then $S=g^{[n-1]}$ for some $g\in G$ with $\ord(g)=n$. 	
\end{proposition}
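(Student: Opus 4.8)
The plan is to separate cases according to the size and the factorization of $n$, reducing everything to a single small value. If $n\ge 10$, then $|S|=n-1\ge\frac{9n}{10}$, so Proposition \ref{c3.2} shows that $S$ is smooth; a smooth sequence of length $n-1$ has the form $g^{[n-1]}$ for some $g\in G$ with $\ord(g)=n$ by the observation recorded in Definition \ref{de1}. If $n=p^{\ell}\ge 3$ is a prime power, then $n-1\ge\frac{n+1}{2}$, so Lemma \ref{l3.3}.2 again shows that $S$ is smooth of length $n-1$, hence $S=g^{[n-1]}$. Since $n=6$ is the only integer with $3\le n<10$ that is not a prime power, it remains to treat $G\cong C_6$, $|S|=5$, directly.

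For $C_6=\la g\ra$ the nonidentity elements are $g,g^2,g^3,g^4,g^5$, with $\ord(g)=\ord(g^5)=6$, $\ord(g^2)=\ord(g^4)=3$, $\ord(g^3)=2$. I would use the following tiny product-one sequences: the singleton $1_G$; the sequence $x\bdot x^{-1}$ for any nonidentity $x$ (product $1_G$, cross number $\frac{2}{\ord(x)}\le 1$); the sequence $x^{[3]}$ for $x$ of order $3$ (cross number $1$); and the four sequences $g\bdot g\bdot g^4$, $g^{[2]}\bdot g^2\bdot g^2$, $g^{[3]}\bdot g^3$, $g^{[4]}\bdot g^2$ (each has product $g^6=1_G$ and cross number $\le 1$). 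Since $S$ has no tiny product-one subsequence, $1_G\notin\supp(S)$; at most one of $g,g^5$ occurs in $S$ and at most one of $g^2,g^4$ does, so the order-$6$ terms of $S$ are mutually equal and likewise the order-$3$ terms; moreover $\mathsf v_{g^3}(S)\le 1$ and $S$ has at most two terms of order $3$. Hence $S$ has at least $5-2-1=2$ terms of order $6$, all equal, and after replacing $g$ by $g^{-1}$ if necessary we may assume $\mathsf v_g(S)\ge 2$. Then $\mathsf v_{g^4}(S)=0$, as otherwise $g\bdot g\bdot g^4\t S$; in particular any order-$3$ terms of $S$ equal $g^2$. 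Likewise $\mathsf v_{g^2}(S)\le 1$, as otherwise $g^{[2]}\bdot g^2\bdot g^2\t S$. Writing $a=\mathsf v_g(S)$, $b=\mathsf v_{g^2}(S)$, $c=\mathsf v_{g^3}(S)$, we have $a+b+c=5$ with $a\ge 2$ and $b,c\le 1$, so $a\ge 3$; if $c=1$ then $g^{[3]}\bdot g^3\t S$, and if $c=0$ and $b=1$ then $a=4$ and $g^{[4]}\bdot g^2\t S$ — in both cases a tiny product-one subsequence, a contradiction. Therefore $b=c=0$ and $S=g^{[5]}$, as claimed.

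The only mildly delicate part is the finite bookkeeping for $n=6$, but with only five positions and the few constraints above the enumeration is very short; every other value of $n$ is handled at once by Proposition \ref{c3.2}, Lemma \ref{l3.3}, and the definition of smoothness.
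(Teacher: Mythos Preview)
Your proof is correct and follows the same overall reduction as the paper: for $n\ge 10$ use Proposition~\ref{c3.2}, for prime powers use the $\frac{n+1}{2}$ bound (the paper invokes Theorem~\ref{t3.1} with $m=1$, which is exactly Lemma~\ref{l3.3}.2), leaving only $n=6$.

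The only genuine difference is in the $n=6$ case. The paper splits $C_6\cong C_2\oplus C_3$, decomposes $S=T_0\bdot S_0\bdot S_1$ along the projections, bounds $|T_0|\le 1$ and $|S_0|\le 2$, and then takes a length-$4$ subsequence $W$ of $S_0\bdot S_1$; since $\mathsf k(W)\le \tfrac23+\tfrac26=1$, either $W$ already yields a tiny product-one subsequence or $W$ is product-one free, and one extends to a maximal product-one free $U$, applies Theorem~\ref{t3.2} to get smoothness, and finishes with Lemma~\ref{le1}. Your treatment is instead a direct enumeration: rule out $1_G$, the pairs $x\bdot x^{-1}$, the cube of an order-$3$ element, and then the four explicit sequences $g\bdot g\bdot g^4$, $g^{[2]}\bdot (g^2)^{[2]}$, $g^{[3]}\bdot g^3$, $g^{[4]}\bdot g^2$, forcing $S=g^{[5]}$. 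Both arguments are short; yours is more elementary and self-contained, while the paper's reuses the machinery (projections, Theorem~\ref{t3.2}, Lemma~\ref{le1}) already set up for the general case, so no new ad hoc list of tiny sequences is needed.
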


\begin{proof}
Suppose $S$ has no tiny product-one subsequence. If $n\ge 10$, it follows by Proposition \ref{c3.2} that $S$ is smooth and hence there exists $g\in G$ with $\ord(g)=n$ such that $S=g^{[n-1]}$. If $n\in \{3, 4, 5, 7, 8, 9\}$, it follows by Theorem \ref{t3.1} that  $S$ is smooth and hence there exists $g\in G$ with $\ord(g)=n$ such that $S=g^{[n-1]}$.

  Now we deal with the only left case that  $n=6$. Suppose $G\cong H_1\oplus H_2$, where $H_1, H_2$ are  cyclic subgroups  with $|H_1|=2$ and $|H_2|=3$. Let $\phi_1\colon G\rightarrow H_1$ and $\phi_2\colon G\rightarrow H_2$ be projections from $G$ to $H_1$ and $H_2$. Then the sequence $S$ has a decomposition $$S=T_0\bdot S_0\bdot S_1\,,$$
where $S_0$ consists by terms $g$ with $\phi_1(g)=0$, $T_0$ consists by terms $g$ with $\phi_2(g)=0$, and $S_1$ consists by terms $g$ with $\phi_2(g)\neq 0$. 
	
	 If $|S_0|\ge 3=|H_2|$, then $S_0$ has a tiny product-one subsequence, a contradiction. If $|T_0|\ge 2=|H_1|$, then $T_0$ has a tiny product-one subsequence, a contradiction. Therefore $|S_0|\le 2$ and $|T_0|\le 1$, which implies that $|S_0\bdot S_1|\ge 4$. Let $W$ be a subsequence of $S_0\bdot S_1$  with $|W|=4$. Then $\mathsf k(W)\le \frac{2}{3}+\frac{2}{6}=1$. If $W$ is not product-one free, then $W$ has a nonempty product-one subsequence, which is  a tiny product-one subsequence, a contradiction. Otherwise $W$ is product-one free and let $U$ be the maximal subsequence of $S$ with $W\mid U$ such that $W$ is product-one free. It follows by $|U|\ge |W|\ge 4$ and Theorem \ref{t3.2} that $U$ is smooth. If $U=S$, then there exists $g\in G$ with $\ord(g)=6$ such that $S=U=g^{[5]}$. If $U\neq S$, let $g\in \supp(S\bdot U^{[-1]})$ and hence $U\bdot g$ is not product-one free. It follows by Lemma \ref{le1} that $U\bdot g$ has a  tiny product-one subsequence, a contradiction.
\end{proof}


\bigskip
\section{On Dihedral groups and Dicyclic groups} \label{3}
\bigskip

In this section, we deal with dihedral and dicyclic groups $G$ and consider both  direct and inverse problems of the invariants $\eta(G)$ and $\mathsf {ti}(G)$. 
We need the following two theorems, which  give  full answers of the direct and the inverse problems of the small Davenport constant for dihedral and dicyclic groups.

\begin{theorem} \label{th4.1} Let $G$ be a dihedral group of order $2n$ with $n\ge 3$. Then $\mathsf d(G)=n$ and if 
	 $S$ is  a product-one free  sequence over $G$ of length $n$, then one of the following holds.
	\begin{enumerate}
		\item There exist $\alpha, \tau\in G$ with $G=\langle \alpha, \tau \colon \alpha^n=\tau^2=1 \text{ and } \alpha\tau=\tau\alpha^{-1}\rangle$ such that $S=\alpha^{[n-1]}\bdot \tau$.
		\item $n=3$ and there exist $\alpha, \tau\in G$ with $G=\langle \alpha, \tau \colon \alpha^3=\tau^2=1 \text{ and } \alpha\tau=\tau\alpha^{-1}\rangle$ such that $S=\tau\bdot \alpha\tau\bdot \alpha^2\tau$.
	\end{enumerate}
\end{theorem}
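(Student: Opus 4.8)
The plan is to analyze $S$ through the decomposition $S = S_C \bdot S_R$ with respect to the index-two cyclic subgroup. Fix generators so that $G = \langle \alpha, \tau : \alpha^n = \tau^2 = 1,\ \alpha\tau = \tau\alpha^{-1}\rangle$ and set $C = \langle \alpha\rangle$; let $S_C$ be the subsequence of $S$ consisting of the terms lying in $C$ and write the remaining (reflection) part as $S_R = (\alpha^{a_1}\tau)\bdot\ldots\bdot(\alpha^{a_k}\tau)$. Since reflections have order two and $1_G \notin \Pi(S)$ for a product-one free $S$, no reflection can occur twice, so $a_1,\ldots,a_k$ are pairwise distinct; and since a product-one subsequence lying inside $C$ is product-one in $G$, $S_C$ is product-one free over $C \cong C_n$, hence $|S_C| \le \mathsf d(C_n) = n-1$ (itself a consequence of Theorem \ref{t3.2}). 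The bound $\mathsf d(G) \ge n$ is realized by $\alpha^{[n-1]}\bdot\tau$: a nonempty subsequence is either a power $\alpha^{[j]}$ with $1 \le j \le n-1$, or it contains $\tau$ and then every product is a reflection; for $n = 3$ the sequence $\tau\bdot\alpha\tau\bdot\alpha^2\tau$ is product-one free for the same reason together with the fact that a product of two reflections in $D_6$ is a nontrivial rotation.

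\textbf{The engine.} For two distinct reflections $h_i = \alpha^{a_i}\tau$, $h_j = \alpha^{a_j}\tau$ occurring in $S$ and any subsequence $T \mid S_C$ with $\sigma(T) = \alpha^e$, the subsequence $h_i \bdot T \bdot h_j$ admits the product $h_i\,\sigma(T)\,h_j = \alpha^{a_i - a_j - e}$, so product-one-freeness of $S$ forces $e \not\equiv a_i - a_j \pmod n$. Letting $T$ range over all subsequences of $S_C$, including the empty one, this says $\alpha^{a_i - a_j} \notin \{1_G\} \cup \Pi(S_C)$ for every $i \ne j$. Now put $A = \{a_i - a_1 : i \in [1,k]\} \subseteq \Z_n$, so $0 \in A$ and $|A| = k$, and note $D := \{a_i - a_j : i \ne j\} = (A - A)\setminus\{0\}$. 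The $|S_C|$ partial products of $S_C$ are pairwise distinct elements of $\Pi(S_C)$ different from $1_G$, so the set $\{\alpha^d : d \in D\}$, the set of partial products of $S_C$, and $\{1_G\}$ are pairwise disjoint subsets of $C_n$; counting gives $(|A-A|-1) + |S_C| + 1 \le n$, i.e. $|A - A| \le n - |S_C|$. If $|S| = n+1$ this yields $|A-A| \le k - 1 < |A|$ (or $S_R$ is empty and $|S_C| = n+1 > \mathsf d(C_n)$), contradicting $|A-A| \ge |A|$; hence $\mathsf d(G) \le n$, so $\mathsf d(G) = n$. If $|S| = n$ it yields $|A-A| \le k = |A|$, whence $|A-A| = |A|$; since $0 \in A$, $A$ is a subgroup $H \le C_n$ of order $k$, so $k \mid n$ and the reflection angles form a single coset $a_1 + H$ of $H$.

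\textbf{Determining $k$ when $|S|=n$.} If $k \ge 4$, choose a generator $t$ of $H$; then $0, t, 2t, 3t$ are distinct and $0 + 3t = t + 2t$, so the four reflections $\alpha^{a_1}\tau$, $\alpha^{a_1+t}\tau$, $\alpha^{a_1+3t}\tau$, $\alpha^{a_1+2t}\tau$ (all of which occur in $S_R$) have the product $\alpha^{(0-t)+(3t-2t)} = 1_G$ in that order, a product-one subsequence of $S$ — contradiction. Hence $k \in \{1, 2, 3\}$. If $k = 1$, then $|S_C| = n-1$, so by Theorem \ref{t3.2} $S_C = \beta^{[n-1]}$ for some generator $\beta$ of $C_n$ and $S = \beta^{[n-1]}\bdot h$ is of the form in (1). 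If $k = 2$ (which forces $2 \mid n$, so $a_2 - a_1 = n/2$) or $k = 3$ (which forces $3 \mid n$): whenever $|S_C| = n - k \ge \frac{n+1}{2}$, Theorem \ref{t3.2} makes $S_C$ $\beta$-smooth for some generator $\beta$, so $\Pi(S_C) = \{\beta, \beta^2, \ldots, \beta^{C}\}$ with $C \ge |S_C| = n-k$; but $D$ consists of the non-identity elements of the order-$k$ subgroup of $C_n$, and the smallest exponent of $\beta$ occurring there is $n/k$, so $D \cap \Pi(S_C) = \emptyset$ forces $C < n/k$, contradicting $C \ge n - k \ge n/k$ (which holds throughout the range $n \ge 5$ in which the smoothness applies). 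The only instances not covered are $(k,n) \in \{(2,4),(3,6)\}$ and the base case $n = 3$; for $n = 3$, $k = 2$ is impossible ($C_3$ has no subgroup of order $2$) while $k = 3$ gives $S = \tau\bdot\alpha\tau\bdot\alpha^2\tau$, which is the form in (2), and for $(2,4)$ and $(3,6)$ one checks by hand that there is no product-one free $S_C$ over $C_n$ of length $n - k$ with $D \cap \Pi(S_C) = \emptyset$. Assembling the cases proves the theorem.

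\textbf{Main obstacle.} The genuine obstacle, I expect, is the pair of observations in the second step: recognizing that the subsequences of $S$ worth testing are the products $h_i \bdot T \bdot h_j$, and then seeing that the resulting constraint $\alpha^{a_i - a_j} \notin \{1_G\} \cup \Pi(S_C)$ is already sharp enough — combined with the elementary facts $|\Pi(S_C)| \ge |S_C|$ and $|A - A| \ge |A|$ — to pin down both $\mathsf d(G) = n$ and the coset structure of the reflection part. Once that is set up, eliminating $k \ge 4$ with a length-four product-one subsequence and handling $k \le 3$ via Theorem \ref{t3.2} is routine, provided one does not overlook the small exceptions $n \in \{3, 4, 6\}$.
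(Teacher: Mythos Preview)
Your argument is correct, but it is worth noting that the paper does not actually prove this theorem at all: its ``proof'' is a two-line citation of Olson--White (for $\mathsf d(G)=n$) and Brochero Mart\'{\i}nez--Ribas (for the inverse structure). So you are not reproducing the paper's reasoning but supplying a self-contained argument where the paper defers to the literature.

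Your route is genuinely different in that sense, and also rather efficient. The key move---testing subsequences of the shape $h_i\bdot T\bdot h_j$ and reading off the constraint $\alpha^{a_i-a_j}\notin\{1_G\}\cup\Pi(S_C)$---combined with the two elementary inequalities $|\Pi(S_C)|\ge|S_C|$ (partial products are distinct in a product-one free sequence) and $|A-A|\ge|A|$, simultaneously yields $\mathsf d(G)\le n$ and, at length $n$, forces the reflection exponents to form a coset of a subgroup $H\le\Z_n$ of order $k$. From there the elimination of $k\ge4$ via the identity $(0-t)+(3t-2t)=0$ and the treatment of $k\in\{2,3\}$ through Theorem~\ref{t3.2} (smoothness of $S_C$) are clean; the residual hand checks at $(k,n)\in\{(2,4),(3,6)\}$ and $n=3$ go through as you claim. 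What your approach buys is a proof internal to the paper's toolkit (only Theorem~\ref{t3.2} is invoked), whereas the cited Brochero Mart\'{\i}nez--Ribas argument proceeds differently and requires its own machinery. The price is that you must dispatch the small cases by hand, but these are short.
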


\begin{proof}
	The assertion follows by \cite{Ol-Wh77} and \cite[Theorem 1.3]{Br-Ri18}.
	\end{proof}

\begin{theorem} \label{th4.2} Let $G$ be a dicyclic group of order $4n$ with $n\ge 2$. Then $\mathsf d(G)=2n$ and if 
	$S$ is  a product-one free  sequence over $G$ of length $2n$, then one of the following holds.
	\begin{enumerate}
		\item There exist $\alpha, \tau\in G$ with $G=\langle \alpha, \tau \colon \alpha^{2n}=1, \tau^2=\alpha^n, \text{ and } \alpha\tau=\tau\alpha^{-1}\rangle$ such that $S=\alpha^{[2n-1]}\bdot \tau$.
		\item $n=2$ and there exist $\alpha, \tau\in G$ with $G=\langle \alpha, \tau \colon \alpha^4=1, \tau^2=\alpha^2, \text{ and } \alpha\tau=\tau\alpha^{-1}\rangle$ such that either $S=\tau^{[3]}\bdot \alpha\tau$ or $S=\tau^{[3]}\bdot \alpha$.
	\end{enumerate}
\end{theorem}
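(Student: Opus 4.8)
I would split the statement into the value $\mathsf d(G)=2n$ and the description of the extremal sequences, and attack the second part by projecting onto the cyclic subgroup $C=\la\alpha\ra\cong C_{2n}$ of index $2$. For $\mathsf d(G)\ge 2n$ one checks that $S=\alpha^{[2n-1]}\bdot\tau$ is product-one free: any subsequence lying in $C$ has product $\alpha^{j}$ with $j\in[1,2n-1]$, and any subsequence containing $\tau$ has product in $G\setminus C$. Since $G$ is non-abelian with a cyclic subgroup of index $2$, the reverse inequality $\mathsf d(G)\le|G|/2=2n$ is exactly the theorem of Olson and White \cite{Ol-Wh77}. For the inverse part I would first record the facts used throughout: $G\setminus C=\{\alpha^{i}\tau\colon i\in[0,2n-1]\}$, every such element has order $4$ with square the central involution $\alpha^{n}$, and $(\alpha^{i}\tau)\alpha^{j}=\alpha^{i-j}\tau$ while $(\alpha^{i}\tau)(\alpha^{j}\tau)=\alpha^{i-j+n}\in C$.

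\textbf{Reduction and the case of one outside term.} Let $S$ be product-one free with $|S|=2n$, and write $S=S_C\bdot S_\tau$, where $S_C$ collects the terms of $S$ lying in $C$ and $S_\tau$ the terms in $G\setminus C$; set $t=|S_\tau|$. As $S_C$ is product-one free over $C_{2n}$, we have $|S_C|\le\mathsf d(C_{2n})=2n-1$, hence $t\ge 1$. If $t=1$, then $S_C$ is product-one free of length exactly $2n-1$ over $C_{2n}$, so by the cyclic inverse theorem (Theorem \ref{t3.2}) $S_C=\beta^{[2n-1]}$ for some generator $\beta$ of $C$; since $\beta$ together with any $\alpha^{i}\tau$ again satisfies the defining relations of $G$, after renaming generators we get $S=\alpha^{[2n-1]}\bdot\tau$, which is case (1), and this sequence is readily seen to be product-one free.

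\textbf{The main case $t\ge 2$.} The key device is a pair-reduction: given two terms $h,h'$ of $S_\tau$, both group products $hh'$ and $h'h$ lie in $C$, and $S'':=\big(S\bdot(h\bdot h')^{[-1]}\big)\bdot hh'$ has length $|S|-1$ and only $t-2$ terms outside $C$; moreover $S''$ stays product-one free, because an ordering of a subsequence of $S''$ realizing $1_G$ still realizes $1_G$ after substituting the two group elements $h,h'$ back in place of $hh'$. Iterating $\lfloor t/2\rfloor$ times, and exploiting the freedom in the choice of pairs and of which of the two products to take at each step, I would extract from $S$ a product-one free sequence $T$ of length $2n-\lfloor t/2\rfloor$ — over $C_{2n}$ when $t$ is even, and over $G$ with a single outside term when $t$ is odd. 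Feeding $T$ into Theorem \ref{t3.2} (it is smooth as soon as $|T|\ge(2n+1)/2$, and $T=\beta^{[2n-1]}$ when $t=2$) forces $S_C$ into a near–arithmetic-progression shape and imposes congruence constraints on the exponents of the terms of $S_\tau$; tracing how these constraints interact with the available reorderings should rule out the existence of such an $S$ once $n\ge 3$, while for $n=2$ (so $G=Q_8$) a direct finite check returns precisely $\tau^{[3]}\bdot\alpha\tau$ and $\tau^{[3]}\bdot\alpha$, which is case (2).

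\textbf{Main obstacle.} I expect essentially all the work to sit in the case $t\ge 2$: the pair-reduction is flexible, so one must argue that \emph{no} choice of pairings can be simultaneously ``long and smooth'' downstairs in $C_{2n}$ unless $n$ is small, and separating the genuine $Q_8$ examples from the contradictions for $n\ge 3$ requires careful bookkeeping with the exponents. As a shortcut, the full structural statement for dicyclic groups is the dicyclic half of Brochero Mart\'{i}nez--Ribas \cite{Br-Ri18}, so one may instead simply invoke \cite{Ol-Wh77} for $\mathsf d(G)=2n$ and \cite{Br-Ri18} for the list of extremal sequences.
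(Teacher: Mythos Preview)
The paper's own ``proof'' of Theorem~\ref{th4.2} is one line: it simply cites Olson--White \cite{Ol-Wh77} for $\mathsf d(G)=2n$ and Brochero Mart\'{\i}nez--Ribas \cite[Theorem~1.4]{Br-Ri18} for the classification of extremal sequences. Your final ``shortcut'' paragraph is therefore exactly what the paper does, and if that is taken as your proof then you match the paper.

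Your more ambitious direct argument, however, is not a complete proof as written. The $t=1$ case is fine, and the pair-reduction device is sound (a product-one ordering of a subsequence of $S''$ containing the contracted term $hh'$ lifts to a product-one ordering in $S$). But in the $t\ge 2$ case you only say that the smoothness constraints on the reduced sequence $T$ ``should rule out'' such $S$ for $n\ge 3$; you neither carry out the exponent bookkeeping nor explain which specific contradiction arises from the multiple available pairings. This is precisely the nontrivial content of \cite{Br-Ri18}, and nothing in your sketch indicates how to bypass it. So either commit to the citation (which is what the paper does) or be aware that turning your outline into an actual argument is essentially redoing the Brochero Mart\'{\i}nez--Ribas proof.
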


\begin{proof}
	The assertion follows by \cite{Ol-Wh77} and \cite[Theorem 1.4]{Br-Ri18}.
	\end{proof}

\smallskip

Now we study the direct and the inverse problems of $\eta(G)$ for dihedral and dicyclic groups.

\smallskip
\begin{theorem} \label{t1}~
Let $G$ be a dihedral group of order $2n$ with $n\ge 3$. Then  $\eta (G) = n + 1$ and if $S$ is a sequence over $G$ with $|S|=n$ such that $S$ has no short product-one subsequence, then there exist $g,h\in G$ with $G=\langle g, h\, \colon\, g^n=h^2=1, hg=g^{-1}h\rangle$ such that $S=g^{[n-1]}\bdot h$ or $S=h\bdot gh\bdot g^2h$ (the latter case can only happen when $n=3$).
\end{theorem}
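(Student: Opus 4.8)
The plan is to derive both assertions from Theorem~\ref{th4.1}, using the elementary observation that for a dihedral group $G$ of order $2n$ with $n\ge 3$ one has $\max\{\ord(g)\colon g\in G\}=n$ (rotations have order dividing $n$, reflections have order $2\le n$), so that a product-one subsequence of any sequence of length at most $n$ is automatically short.

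For the inverse part, suppose $|S|=n$. Every subsequence of $S$ then has length at most $n=\max\{\ord(g)\colon g\in G\}$, so $S$ has no short product-one subsequence if and only if $S$ is product-one free. Thus $S$ is a product-one free sequence of length $n=\mathsf d(G)$, and Theorem~\ref{th4.1} applies: either $S=\alpha^{[n-1]}\bdot\tau$ for suitable $\alpha,\tau$, or $n=3$ and $S=\tau\bdot\alpha\tau\bdot\alpha^2\tau$. Setting $g=\alpha$ and $h=\tau$ translates these into the asserted $S=g^{[n-1]}\bdot h$ and $S=h\bdot gh\bdot g^2h$, since the relations $\alpha^n=\tau^2=1,\ \alpha\tau=\tau\alpha^{-1}$ are the same as $g^n=h^2=1,\ hg=g^{-1}h$ and $\la g,h\ra=\la\alpha,\tau\ra=G$; the second case occurs only when $n=3$.

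For the direct part, the lower bound $\eta(G)\ge n+1$ is immediate from $\mathsf d(G)=n$ (Theorem~\ref{th4.1}) and the general inequality $\mathsf d(G)+1\le\eta(G)$; concretely $\alpha^{[n-1]}\bdot\tau$ is product-one free of length $n$, hence has no short product-one subsequence. For the upper bound I would show that no sequence of length $n+1$ avoids a short product-one subsequence. Let $S$ with $|S|=n+1$ have none. Any nonempty product-one subsequence of a proper subsequence of $S$ has length $\le n$, hence is short, so every proper subsequence of $S$ is product-one free, whereas $S$ itself, being of length $>\mathsf d(G)$, is not; hence $S$ is a minimal product-one sequence, and for every $g\in\supp(S)$ the sequence $S\bdot g^{[-1]}$ is product-one free of length $n=\mathsf d(G)$, so it is of type~(1) or type~(2) in Theorem~\ref{th4.1}. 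Now let $t$ be the number of reflections occurring in $S$. Projecting onto $G/\la\alpha\ra\cong C_2$ shows a product-one sequence has an even number of reflections, so $t$ is even; a repeated reflection would give a product-one subsequence of length $2$, so the reflections in $S$ are distinct and $t\le n$; and $t\ne 0$, since otherwise a length-$n$ subsequence of the all-rotation sequence $S$ would be a sequence over $C_n$ of length $n>\mathsf d(C_n)=n-1$ and would contain a short product-one subsequence. Thus $t\ge 2$. If $S$ contained a rotation, deleting it would leave $t$ reflections inside a sequence of type~(1) (which has exactly one reflection) or of type~(2) (which has $n$ reflections and forces $n=3$, hence $t=3$ odd); both contradict ``$t\ge 2$ and even''. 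Therefore $S$ has no rotation, i.e. $t=n+1>n$, a contradiction, so $\eta(G)\le n+1$.

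The whole argument rests on Theorem~\ref{th4.1}, so there is no genuine combinatorial obstacle beyond it; the points requiring care are the reduction ``length $\le n\Rightarrow$ short'', the parity bookkeeping for the number of reflections, and keeping track of the exceptional configuration at $n=3$ coming from type~(2) of Theorem~\ref{th4.1}, all of which the reflection-count argument handles uniformly.
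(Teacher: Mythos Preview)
Your proof is correct. The inverse part is identical to the paper's: both reduce ``no short product-one subsequence'' to ``product-one free'' via $\max\{\ord(g)\colon g\in G\}=n$ and then invoke Theorem~\ref{th4.1}. For the upper bound $\eta(G)\le n+1$, however, you take a genuinely different route. The paper fixes a length-$n$ subsequence $T$ of $S$, applies Theorem~\ref{th4.1} to $T$ when $T$ is product-one free, and then does an explicit case analysis on the leftover term $S\bdot T^{[-1]}$, exhibiting a concrete short product-one subsequence in each case. You instead argue by contradiction: a counterexample $S$ of length $n+1$ must be a minimal product-one sequence, and then a parity count on the number $t$ of reflections (even because $S$ is product-one, at most $n$ because reflections cannot repeat, at least $2$ because $t=0$ forces a short product-one inside the rotation subgroup) collides with the reflection counts $1$ or $3$ forced by Theorem~\ref{th4.1} on $S\bdot g^{[-1]}$. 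Your argument is cleaner and avoids the explicit constructions; the paper's version has the minor advantage of actually producing the short product-one subsequence rather than merely asserting its existence.
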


\begin{proof}
	Note that $\max\{\ord(g)\colon g\in G\}=n$. Thus a short sequence over $G$ is a sequence $S$ over $G$ with $1\le |S|\le n$.

1. Clearly Theorem \ref{th4.1} implies  $n+1 = \mathsf d (G) +1 \le \eta (G)$. Let $S \in \mathcal F (G)$ be a sequence of length $|S| = n+1$. We need to show that there exists a product-one subsequence of $S$ having length at most $n$. We may assume that $1\not\in \supp(S)$. Let $T \t S$ be a subsequence of length $|T| = n$.
If $T$ has a non-trivial product-one subsequence, then we are done. If $T$ has no non-trivial product-one subsequence, then it is product-one free sequence of length $\mathsf d (G) = n$. By Theorem \ref{th4.1}, we distinguish two cases.

Suppose $n=3$ and  there exist $\alpha,\tau\in G$ with $G=\langle \alpha,\tau\colon \alpha^3=\tau^2=1, \alpha\tau=\tau \alpha^{-1}\rangle$ such that  $T=\tau\bdot \alpha\tau\bdot \alpha^2\tau$.
If $S\bdot T^{[-1]}=\alpha^r$ for some $r\in [1,2]$, then $\alpha^r\bdot \tau\bdot \alpha^r\tau$ is a short product-one subsequence of $S$. If $S\bdot T^{[-1]}=\alpha^r\tau$ for some $r\in [0,2]$, then $\alpha^r\tau\bdot \alpha^r\tau$ is a short product-one subsequence of $S$.

Suppose there exist $\alpha,\tau\in G$ with $G=\langle\alpha,\tau \colon \alpha^n=\tau^2=1, \alpha\tau=\tau \alpha^{-1}\rangle$ such that  $T=\alpha^{[n-1]}\bdot\tau$. If $S\bdot T^{[-1]}=\alpha^r$ for some $r\in [1,n-1]$, then $\alpha^{r}\bdot \alpha^{[n-r]}$ is a short product-one subsequence. If $S\bdot T^{[-1]}=\alpha^r\tau$ for some $r\in [0,n-1]$,   then both $\alpha^r\tau\bdot \alpha^{[r]}\bdot \tau$ and $\alpha^{[n-r]}\bdot \alpha^r\tau\bdot \tau$ are product-one subsequences of $S$. Since $$|\alpha^r\tau\bdot \alpha^{[r]}\bdot \tau|+|\alpha^{[n-r]}\bdot \alpha^r\tau\bdot \tau|=1+r+1+n-r+1+1=n+4\le 2n+1\,,$$ we obtain one of them must be a short product-one subsequence.

\medskip
	2. 
Suppose $|S|=n$ and $S$ has no short product-one subsequence, which means $S$ is product-one free. The assertion follows by Theorem \ref{th4.1}.
\end{proof}


\smallskip
\begin{theorem} \label{t2}
	Let $G$ be a dicyclic group of order $4n$ with $n\ge 2$. Then
	$\eta (G) = 2n + 1$  and if $S$ is a sequence over $G$ with $|S|=2n$ such that $S$ has no short product-one subsequence, then there exist $g,h\in G$ with $G=\langle g,h\colon g^{2n}=1, h^2=g^n, hg=g^{-1}h\rangle$ such that $S=g^{[2n-1]}\bdot h$, or $S=g\bdot h^{[3]}$, or $S=gh\bdot h^{[3]}$ (the latter two cases can only happen when $n=2$). 
\end{theorem}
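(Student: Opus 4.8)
The plan is to mirror the proof of Theorem~\ref{t1}, using Theorem~\ref{th4.2} as the structural input. First I would record that $\max\{\ord(g)\colon g\in G\}=2n$, so that a short sequence over $G$ is exactly a sequence of length in $[1,2n]$. With this, the inverse statement and the lower bound are almost immediate: if $|S|=2n$, then every non-trivial product-one subsequence of $S$ has length at most $2n$ and hence is short, so the hypothesis that $S$ has no short product-one subsequence simply says that $S$ is product-one free; thus the claimed list of sequences is precisely the list of Theorem~\ref{th4.2} after the renaming $\alpha\mapsto g$, $\tau\mapsto h$ (for $n\ge 3$ only $g^{[2n-1]}\bdot h$ occurs, and the two extra forms $g\bdot h^{[3]}$, $gh\bdot h^{[3]}$ appear only for $n=2$). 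Moreover, a product-one free sequence of length $\mathsf d(G)=2n$ exists, which gives $\eta(G)\ge \mathsf d(G)+1=2n+1$.

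For the upper bound $\eta(G)\le 2n+1$ I would take $S\in\mathcal F(G)$ with $|S|=2n+1$, assume $1_G\notin\supp(S)$, and fix a subsequence $T\t S$ with $|T|=2n$. If $T$ has a non-trivial product-one subsequence we are done, since such a subsequence has length at most $2n$ and is therefore short. Otherwise $T$ is product-one free of length $\mathsf d(G)$, Theorem~\ref{th4.2} applies, and $x:=S\bdot T^{[-1]}$ is a single element with $x\neq 1_G$. In the principal case $T=\alpha^{[2n-1]}\bdot\tau$ I split as follows: if $x=\alpha^r$ with $r\in[1,2n-1]$, then $\alpha^r\bdot\alpha^{[2n-r]}\t S$ has product $\alpha^{2n}=1_G$ and length $1+(2n-r)\le 2n$; if $x=\alpha^r\tau$ with $r\in[0,2n-1]$, I take $\alpha^r\tau\bdot\tau\bdot\alpha^{[n-r]}$ when $r\le n$ (product $\alpha^r\tau^2\alpha^{n-r}=\alpha^{2n}=1_G$, length $n+2-r\le n+2\le 2n$) and $\tau\bdot\alpha^r\tau\bdot\alpha^{[r-n]}$ when $r\ge n+1$ (product $\alpha^{-r}\tau^2\alpha^{r-n}=1_G$, length $r-n+2\le n+1\le 2n$), the required powers of $\alpha$ being available in each case. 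When $n=2$, where $G$ is the quaternion group of order $8$ and $T$ may instead be $\tau^{[3]}\bdot\alpha\tau$ or $\tau^{[3]}\bdot\alpha$, I would run through the seven possibilities for $x\neq 1_G$ in each case and exhibit by hand a product-one subsequence of length at most $4$, using $\tau^2=\alpha^2$ and $\alpha\tau=\tau\alpha^{-1}$ (for instance $\tau^{[4]}$ if $x=\tau$, $\tau\bdot\tau\bdot\alpha^2$ if $x=\alpha^2$, $\tau\bdot\alpha\bdot\alpha\tau$ if $x=\alpha$). Combining all cases yields $\eta(G)=2n+1$.

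The delicate point is the subcase $x=\alpha^r\tau$ for general $n$: the two-term subsequence $\tau\bdot\alpha^r\tau$ has product $\alpha^{r+n}$ or $\alpha^{n-r}$ depending on the chosen order, and naively appending powers of $\alpha$ to complete it to a product-one sequence can cost up to $2n-1$ terms and so fail to be short; one must select the order according to whether $r\le n$ in order to keep the total length below $2n$. Beyond this, the only additional effort is the finite verification for the quaternion group in the case $n=2$, which is routine.
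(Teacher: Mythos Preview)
Your proposal is correct and follows essentially the same approach as the paper's own proof: both derive the lower bound and the inverse statement directly from Theorem~\ref{th4.2}, and for the upper bound both fix a length-$2n$ subsequence $T$, apply Theorem~\ref{th4.2} when $T$ is product-one free, and handle the remaining term by the same split (powers of $\alpha$ versus $\alpha^r\tau$ with the subcases $r\le n$ and $r>n$), together with the finite check for $n=2$. Your explicit remark on why the $r\le n$ versus $r>n$ distinction is needed is a nice clarification, but otherwise the arguments coincide.
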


\begin{proof}	Note that $\max\{\ord(g)\colon g\in G\}=2n$. Thus a short sequence over $G$ is a sequence $S$ over $G$ with $1\le |S|\le 2n$.

	1. Clearly Theorem \ref{th4.2} implies that $2n + 1 = \mathsf d (G) + 1 \le \eta (G)$. Let $S \in \mathcal F (G)$ be a sequence of length $|S| = 2n+1$. We need to show that there exists a product-one subsequence of $S$ of length at most $2n$. We may assume that $1\not\in \supp(S)$. Let $T \t S$ be a subsequence of length $|T| = 2n$. If $T$ has a nonempty product-one subsequence, then we are done. If not, then it is product-one free sequence of length $\mathsf d (G) = 2n$. We distinguish three cases depending on Theorem \ref{th4.2}.
	
	Suppose $n=2$ and there exist $\alpha, \tau\in G$ with $G=\langle \alpha, \tau \colon \alpha^4=1, \tau^2=\alpha^2, \text{ and } \alpha\tau=\tau\alpha^{-1}\rangle$ such that  $T=\tau^{[3]}\bdot \alpha\tau$.
	If $S\bdot T^{[-1]}=\alpha^r$ for some $r\in [1,3]$, then one of the sequences  $\alpha^r\bdot \alpha\tau\bdot \tau, \alpha^r\bdot \tau^{[2]}, \alpha^r\bdot \tau\bdot \alpha\tau$ must be a short product-one subsequence of $S$. If $S\bdot T^{[-1]}=\alpha^r\tau$ for some $r\in [0,3]$, then one of the sequences  $\alpha^r\tau\bdot \tau^{[3]}, \alpha^r\tau\bdot \alpha\tau\bdot \tau^{[2]}, \alpha^r\tau\bdot \tau, \alpha^r\tau\bdot \alpha\tau$ must be a short product-one subsequence of $S$.

	Suppose $n=2$ and there exist $\alpha, \tau\in G$ with $G=\langle \alpha, \tau \colon \alpha^4=1, \tau^2=\alpha^2, \text{ and } \alpha\tau=\tau\alpha^{-1}\rangle$ such that $T=\tau^{[3]}\bdot \alpha$.
	If $S\bdot T^{[-1]}=\alpha^r$ for some $r\in [1,3]$, then one of the sequences  $\alpha^r\bdot \alpha\bdot \tau^{[2]}, \alpha^r\bdot \tau^{[2]}, \alpha^r\bdot  \alpha$ must be a short product-one subsequence of $S$. If $S\bdot T^{[-1]}=\alpha^r\tau$ for some $r\in [0,3]$, then one of the sequences  $\alpha^r\tau\bdot \tau^{[3]}, \alpha\bdot \alpha^r\tau\bdot \tau, \alpha^r\tau\bdot \tau, \alpha^r\tau\bdot \alpha\bdot \tau$ must be a short product-one subsequence of $S$. 
	
	Suppose there exist $\alpha, \tau\in G$ with $G=\langle \alpha, \tau \colon \alpha^{2n}=1, \tau^2=\alpha^n, \text{ and } \alpha\tau=\tau\alpha^{-1}\rangle$ such that $S=\alpha^{[2n-1]}\bdot \tau$. If $S\bdot T^{[-1]}=\alpha^x$ for some $x\in [1, 2n-1]$, then $\alpha^x\bdot \alpha^{[2n-x]}$ is a short product-one subsequence of $S$.	
Suppose  $S\bdot T^{[-1]} =\alpha^{y}\tau$ for some $y \in [0,2n-1]$. If $y\le n$, then $\alpha^{[n-y]}\bdot \alpha^y\tau\bdot \tau$ is a short product-one subsequence. If $y> n$, then $\alpha^y\tau\bdot \alpha^{y-n}\bdot \tau$ is a short product-one subsequence.
	
	\medskip
	2. 	
Suppose $S$ is a sequence over $G$ of length $2n$.
	If $S$ has no short product-one subsequence, then $S$ is product-one free. The assertion follows by Theorem \ref{th4.2}.
\end{proof}

\medskip
\begin{proof}[Proof of Theorem \ref{th2}]
 Suppose $\alpha,\tau\in G$ such that $G=\langle \alpha,\tau \colon \alpha^n=\tau^2=1, \alpha\tau=\tau\alpha^{-1}\rangle$. Let $H=\langle \alpha\rangle$ and $G_0=G\setminus H$. Set $W=\alpha^{[n-1]}\bdot \tau\bdot \tau\alpha\bdot \ldots \bdot \tau\alpha^{n-1}$.  Then $|W|=2n-1$. If $V$ is a nonempty product-one subsequence of $W$, then $|V_{G_0}|\ge 2$ and $V\neq V_{G_0}$. Thus $\mathsf k(V)>\mathsf k(V_{G_0})\ge 1$ and hence $W$ has no tiny product-one subsequence. Therefore $\mathsf {ti}(G)\ge 2n$.

1.  	Let $S$ be a sequence over $G$ of length $2n$. We only need to show that $S$ has a tiny product-one subsequence.
	 If $|S_{G_0}|\ge n+1$, then there exists $j\in [0,n-1]$ such that $(\alpha^j\tau)^{[2]}\t S_{G_0}$, which implies $(\alpha^j\tau)^{[2]}$ is a tiny product-one subsequence of $S$. Otherwise $|S_{H}|\ge n=|H|$. Then $S_H$ and hence $S$ have a tiny product-one subsequence.

	\medskip
	2. 
	Suppose $|S|=2n-1$ and $S$ has no tiny product-one subsequence. We only need to show that $S$ has the desired form.
	If $\mathsf h(S_{G_0})\ge 2$, then there exists $g\in G_0$ such that $\mathsf v_g(S_{G_0})\ge 2$, whence $g^{[2]}$ is a tiny product-one subsequence of $S$, a contradiction. Therefore $S_{G_0}$ is square free and hence $|S_{G_0}|\le n$.
	 If $|S_H|\ge n=|H|$, then $S_H$ and hence $S$ have a tiny product-one subsequence, a contradiction. Therefore $|S_H|=n-1$ and $|S_{G_0}|=n$. The assertion follows by Proposition \ref{c3.3} and $S_{G_0}$ is square free.
	\end{proof}

\begin{lemma}\label{l5.2}
	 Let $G=\langle\alpha, \tau\colon \alpha^{2n}=1, \tau^2=\alpha^n, \text{ and } \alpha\tau=\tau\alpha^{-1} \rangle$ be a dicyclic group of order $4n$ with $n\ge 116$,  let $H=\langle\alpha\rangle$, let $G_0=G\setminus H$,  and let $S$ be a sequence over $G$ with $|S|=2n$ and $|S_{G_0}|\ge 2$. Then 
	 $S$ has a tiny product-one subsequence
\end{lemma}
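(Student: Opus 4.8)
The plan is to argue by contradiction: suppose $S$ has no tiny product-one subsequence. I write a term of $S$ lying in $G_0$ as $\alpha^a\tau$; then it has order $4$, $(\alpha^a\tau)^{-1}=\alpha^{a+n}\tau$, and $(\alpha^a\tau)(\alpha^b\tau)=\alpha^{a-b+n}\in H$. First I would record the elementary reductions coming from the fact that $1_G$, $(\alpha^n)^{[2]}$, $g^{[4]}$, $g\bdot g^{-1}$, $g^{[2]}\bdot (g')^{[2]}$ and $g^{[2]}\bdot\alpha^n$ (for $g\neq g'$ in $G_0$) are all tiny product-one sequences: we may assume $1_G\notin\supp(S)$, $\mathsf v_{\alpha^n}(S)\le 1$, $\mathsf h(S_{G_0})\le 3$, no $g\in G_0$ has both $g$ and $g^{-1}$ in $\supp(S)$, at most one element of $G_0$ occurs in $S$ with multiplicity $\ge 2$, and if one does then $\alpha^n\notin\supp(S)$; moreover $S_H$ (the subsequence of terms in $H$) has itself no tiny product-one subsequence. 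Put $k=|S_{G_0}|\ge 2$, so $|S_H|=2n-k$, and note that the exponents occurring in $S_{G_0}$ contain no pair $\{a,a+n\}$, so at most $n$ distinct values occur.

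The central device is a transfer to the cyclic group $H\cong C_{2n}$, where the inverse results of Section~3 are available. From $\alpha^a\tau\cdot h=h^{-1}\cdot\alpha^a\tau$ for $h\in H$ one checks that, for terms $\alpha^a\tau,\alpha^b\tau$ of $S_{G_0}$ and any $T_H\t S_H$, the sequence $(\alpha^a\tau)\bdot(\alpha^b\tau)\bdot T_H$ is product-one whenever $\sigma(T_H)\in\{\alpha^{n+(a-b)},\alpha^{n-(a-b)}\}$, and then has cross number $\tfrac12+\mathsf k(T_H)$, hence is tiny once $\mathsf k(T_H)\le\tfrac12$; likewise four terms of $S_{G_0}$ whose exponents split into two pairs of equal sum modulo $2n$ form a product-one sequence of cross number $1$. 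So it is enough to exhibit one of these two configurations.

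Now split on $k$. If $k$ is small relative to $n$ — for instance $k\le n/5$, so that $|S_H|=2n-k\ge\tfrac{9}{10}\cdot 2n$ — then by Proposition~\ref{c3.2} (or, when $2n$ has a sizable prime-power divisor, by the sharper Theorem~\ref{t3.1}) $S_H$ is $\beta$-smooth for some generator $\beta=\alpha^t$ of $H$, and by Lemma~\ref{le1}.1 every $\beta^c$ with $1\le c\le n$ is $\sigma(T_H)$ for some $T_H\t S_H$ with $\mathsf k(T_H)\le\tfrac12$. If some $g\in G_0$ has $\mathsf v_g(S)\ge 2$, then taking $c=n$ (so $\beta^n=\alpha^n$, since $t$ is odd) the sequence $g^{[2]}\bdot T_H$ is tiny and product-one, a contradiction; hence $S_{G_0}$ is squarefree, with distinct terms $\alpha^{a_1}\tau,\dots,\alpha^{a_k}\tau$, and the transfer device forbids $\alpha^{n\pm(a_i-a_j)}\in\{\beta^1,\dots,\beta^n\}$ for all $i\neq j$. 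Multiplying exponents by the odd residue $t^{-1}$ this forces $a_i-a_j\equiv n\pmod{2n}$ for all $i\neq j$ — impossible for $k=2$ (two terms would then be mutually inverse) and self-contradictory for $k\ge 3$.

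If $k$ is large, I would instead work purely combinatorially with the exponents of $S_{G_0}$: once the number of pairs of positions of a suitable squarefree sub-multiset of $S_{G_0}$ exceeds $2n$, two of its pair-sums coincide in $\Z_{2n}$, and since they cannot share a position (that would force two equal exponents in a squarefree sequence) they come from disjoint pairs, so the transfer device yields a tiny product-one subsequence of cross number $1$. The delicate part is the intermediate range between these two regimes, together with the configurations in which $G_0$ contributes a multiplicity-$3$ term or in which the exponent set of $S_{G_0}$ lies close to an arithmetic progression; these have to be handled by a finer analysis — for example a second application of the transfer device against $S_H$ (which still has $\ge n$ terms and therefore realizes a sizable set of products of cross number $\le\tfrac12$), or by exploiting the near-progression structure directly. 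This intermediate regime is the main obstacle, and making the estimates on its two sides meet is precisely what forces the hypothesis $n\ge 116$.
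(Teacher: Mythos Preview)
Your two-case split is exactly the paper's approach, and both cases are handled the same way the paper handles them: smoothness of $S_H$ via Proposition~\ref{c3.2} when $|S_H|\ge 9n/5$, and a pigeonhole on pair-sums of the exponents in $S_{G_0}$ when $|S_{G_0}|$ is large. The proposal is not finished, however, because you believe there is a difficult ``intermediate regime'' between the two cases --- and this is the real gap, since that regime does not exist.

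Concretely: your small-$k$ case is $k=|S_{G_0}|\le n/5$, equivalently $|S_H|\ge 9n/5$; your large-$k$ case needs $\binom{k-2}{2}>2n$. The whole point of the hypothesis $n\ge 116$ is that this inequality already holds once $k\ge\lceil n/5\rceil+1$ (the paper checks $\binom{\lceil 0.2n\rceil-1}{2}\ge 0.1n\lceil 0.2n\rceil-0.3n>2n$ for $n\ge 116$). So the pigeonhole case picks up immediately where the smoothness case leaves off, and there is nothing further to analyse. Your remark that ``making the estimates on its two sides meet is precisely what forces the hypothesis $n\ge 116$'' is correct, but you stopped just short of verifying that they \emph{do} meet.

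One further simplification in your first case: there is no need to treat repeated elements of $G_0$ separately, nor to translate exponents by $t^{-1}$ and argue that $a_i-a_j\equiv n$. Once $S_H$ is $g$-smooth for a generator $g$ of $H$, simply write any two terms of $S_{G_0}$ as $g^i\tau$ and $g^j\tau$ \emph{with respect to $g$} (so $0\le i\le j\le 2n-1$). Then either $j-i\le n$ or $2n-(j-i)\le n$; in either case the required value of $\sigma(T)$ making $T\bdot g^i\tau\bdot g^j\tau$ product-one lies in $\{g,g^2,\ldots,g^n\}\subset\Pi(S_H)$, and any such $T$ satisfies $\mathsf k(T)\le 1/2$ by Lemma~\ref{le1}.1. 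This is the paper's argument and it handles all subcases at once.
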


\begin{proof}
	  Assume to the contrary that $S$ has no tiny product-one subsequence. We distinguish two cases.
	
	\medskip
	\noindent
	{\bf CASE 1.} \, $|S_H| \ge \frac{9n}{5}$.
	
	Since $S_H$ has no tiny product-one subsequence and $|S_H|\ge \frac{9|H|}{10}$, it follows by Proposition \ref{c3.2} that 
	$S_H$ is $g$-smooth for some $g\in H$ with $\ord(g)=2n$. Then $$\{g, g^2, \ldots, g^{\lceil \frac{9n}{5}\rceil}\}\subset\Pi(S_H)\,.$$
	 Note that $G_0=\{g^r\tau\colon r\in [0, 2n-1]\}$.
	Choose $g^i\tau, g^j\tau\in \supp(S_{G_0})$ with $0\le i\le j\le 2n-1$ such that $g^i\tau\bdot g^j\tau\mid S_{G_0}$. Then $j-i\le n$ or $2n+i-j\le n$. 
	
	Suppose $j-i\le n$. 
	Let $T$ be a subsequence of $S_H$ such that $\sigma(T)=g^{j-i}$. Then $T\bdot g^i\tau\bdot g^j\tau$ is a product-one subsequence and it follows by Lemma \ref{le1} that $\mathsf k(T\bdot g^i\tau\bdot g^j\tau)= \frac{2}{4}+\mathsf k(T)\le \frac{2}{4}+\frac{j-i}{2n}\le 1$, whence $T\bdot g^i\tau\bdot g^j\tau$ is a tiny product-one sequence, a contradiction.

	Suppose $2n+i-j\le n$. 
	Let $T$ be a subsequence of $S_H$ such that $\sigma(T)=g^{2n+i-j}$. Then $T\bdot g^i\tau\bdot g^j\tau$ is a product-one subsequence and it follows by Lemma \ref{le1} that $\mathsf k(T\bdot g^i\tau\bdot g^j\tau)= \frac{2}{4}+\mathsf k(T)\le \frac{2}{4}+\frac{2n+i-j}{2n}\le 1$, whence $T\bdot g^i\tau\bdot g^j\tau$ is a tiny product-one sequence, a contradiction

	\medskip
	\noindent
	{\bf CASE 2.} \, $|S_{G_0}| \ge \lceil\frac{n}{5}\rceil+1$.
	
	Suppose $S_{G_0}=\alpha^{r_1}\tau\bdot \ldots \alpha^{r_{|S_{G_0}|}}\tau$, where $r_1,\ldots, r_{|S_{G_0}|}\in [0, 2n-1]$. Since $S_{G_0}$ has no product-one subsequence of length $4$, we obtain
	 $r_i+r_j\not\equiv r_s+r_t\pmod {2n}$ for any distinct $i,j,s,t\in [1, |S_{G_0}|]$. Let $W=r_1\bdot \ldots \bdot r_{|S_{G_0}|}$. Then $\mathsf h(W)\le 3$ and there are at most one term of $W$ with the multiplicity larger than $1$, whence
	by taking at most two terms out, the remaining sequence $W'$ of $W$ is square free. Since $${{|W'|}\choose2}\ge {{|S_{G_0}|-2}\choose2}\ge \frac{(\lceil0.2n\rceil-1)( 0.2n -2)}{2}\ge 0.1n\lceil0.2n\rceil-0.3n > 2n=|G_0|\,,$$
	there exist $i,j,s,t\in [1, |S_{G_0}|]$ with $i\neq j$, $s\neq t$, $\{i,j\}\neq \{s,t\}$, $r_i\bdot r_j$ divides $W'$, and $r_s\bdot r_t$ divides $W'$, such that $r_i+r_j\equiv r_s+r_t\pmod {2n}$. Therefore $\{i,j\}\cap \{s,t\}\neq \emptyset$, which implies that $\{i,j\}=\{s,t\}$, a contradiction.
\end{proof}


\begin{proof}[Proof of Theorem \ref{th3}]
	 Suppose $n\ge 116$ and suppose there exist $g,h\in G$ such that $G=\langle g, h\colon  g^{2n}=1, h^2=g^n, \text{ and } hg=g^{-1}h\rangle$. Let $W=g^{[2n-1]}\bdot h$. Then it is easy to see that $W$ has no tiny product-one subsequence. Thus $\mathsf {ti}(G)\ge 2n+1$.
	 
	 1. Let $S$ be a sequence of length $|S| = 2n+1$. We only need to show that there exists a tiny product-one subsequence of $S$. If $|S_H|\ge 2n=|H|$, then $S$ has a tiny product-one sequence. Otherwise $|S_{G_0}|\ge 2$.
	It follows by Lemma \ref{l5.2} that $S$ has a tiny product-one subsequence.

	\medskip
	2. Suppose $|S|=2n$ and $S$ has no tiny product-one subsequence. We only need to show that $S$ has the desired form.
	If $|S_H|=2n=|H|$, then $S$ has a tiny product-one sequence, a contradiction.
	If  $|S_{G_0}|\ge 2$,  it follows by Lemma \ref{l5.2} that $S$ has a tiny product-one subsequence, a contradiction.
	Therefore $|S_{G_0}|=1$ and $|S_H|=2n-1$. Since $S_H$ has no tiny product-one subsequence, the assertion follows by Proposition \ref{c3.3}.	
\end{proof}


\providecommand{\bysame}{\leavevmode\hbox to3em{\hrulefill}\thinspace}
\providecommand{\MR}{\relax\ifhmode\unskip\space\fi MR }
\providecommand{\MRhref}[2]{%
  \href{http://www.ams.org/mathscinet-getitem?mr=#1}{#2}
}
\providecommand{\href}[2]{#2}

\end{document}